\documentclass[11pt,reqno]{amsart}
\usepackage{amssymb, amsmath}
\usepackage{mathrsfs}
\usepackage{enumerate}

\newcommand {\PP}{{I\kern-.3em P}}
\newcommand {\ZZ}{{\mathbb{Z}}}

\newcommand {\HH}{{\mathbb{H}}}
\newcommand {\RR}{{\mathbb{R}}}

\newcommand{\beq}{\begin{equation}}
\newcommand{\eeq}{\end{equation}}
\newcommand{\beqq}{\begin{equation*}}
\newcommand{\eeqq}{\end{equation*}}

\newcommand{\charf}{\raisebox{\depth}{\(\chi\)}}
 



\def\I1{\mathbb{1}}
\def\d{\displaystyle}

\newtheorem{thm}{Theorem}
\newtheorem{lemma}{Lemma}
\newtheorem{prop}{Proposition}
\newtheorem{cor}{Corollary}
\newtheorem{rem}{Remark}

 \numberwithin{equation}{section}

\title[Geodesic Returns]{Excursion and return times of a geodesics   to a subset  of a hyperbolic  
Riemann  surface}
\author{Andrew Haas }
 
 \subjclass[2000]{30F35(primary),  32Q45, 37E35,  53D25 (secondary) } 
 \begin{document}

\maketitle
\begin{abstract}

 We calculate the asymptotic average rate at which a generic geodesic on a finite area hyperbolic 2-orbifold returns to a subsurface with geodesic boundary. As a consequence we get the  average time a generic geodesic spends in such a subsurface. Related results are obtained for excursions into a collar neighborhood of a simple closed geodesic and the associated distribution of excursion depths. 

 \end{abstract}

\section{Introduction}
The  geodesic flow on the unit tangent bundle of a finite area hyperbolic Riemann surface is ergodic \cite{nicholls}.   One important, well known consequence  is that on average, the time spent by a generic geodesic  in a subset of the surface is equal to the relative area of the set. At first glance it does not appear that this approach tells us anything directly about other specific aspects of the behavior of the geodesic relative to the  set; for example, about the average rate at which the geodesic returns to the set, the average length of each visit to the  set ---called the excursion time-- or the average time between visits: all related quantities.  The reason for this is that these values are determined by certain aspects of the geometry of the   sets  beyond just  the relative area. 
 Nevertheless, for a reasonably large and interesting selection of sets, once the geometry is properly accounted for,  it is the classical Ergodic Theorem for flows from which one can infer the existence of a limit and compute its value.

In the cases considered here,  where    the subset is a subsurface  with geodesic boundary  or a collar  neighborhood of a simple closed geodesic, it is possible to provide precise descriptions of these types of  behavior for a generic geodesics relative to the set.  In the first instance the return time depends only on the length of the boundary and the area of the surface, whereas excursion times are determined by the relative area along with the length of the boundary. In the second case the return time depends on the geodesics length, the width of its collar  and the area of the surface, while excursion times only depend on the width of the collar.  This last fact is at first surprising, yet it mirrors the result from \cite{haas} where it was shown that the average excursion time into an embedded cusp neighborhood is $\pi$, independent of the area of the neighborhood and furthermore, independent of the surface. In the latter case it is also possible to describe the distributions of the maximal depths of geodesic excursions--again something that depends only on the geometry of the collar. This is reminiscent of results from \cite{bjw} and \cite{haas3}.

The return time to a cusp  relative to the depth of the excursion was studied by Sullivan  in \cite{sullivan}, leading in various directions, to generalizations  and  tangents. 
 One of these tangents was followed by Nakada in \cite{nakada}, where he found the average return time to a cusp neighborhood of a finite volume 3-manifold, as one step in his study of approximation properties of rationals in an imaginary quadratic number field. His approach was taken up by Stratmann \cite{strat}, who used it to get estimates for the average return time of a generic geodesic in a hyperbolic manifold--where generic  means with respect to the  Liouville-Patterson measure on the unit tangent bundle. We employed their methods in \cite{haas} to   determine average return and excursion times to a cusp neighborhood on a finite area hyperbolic   surface and, like Nakada and Stratmann, we used these values to establish metrical results for approximation by the cusps of a Fuchsian group.   

\section{Background and main results}
\subsection{Geodesic boundary}
A finite area hyperbolic 2-orbifold $S$ is the quotient of the Poincar\'e upper-half plane $\HH$ by a discrete group $G\subset \text{PSL}_2(\RR).$ 

Let $\gamma_1,\ldots, \gamma_n$ be a collection of  disjoint simple closed geodesics on $S$, which together bound  a subsurface $M$ of $S$. We write $\Gamma =\cup\gamma_i$ for the boundary of $M$. The length of a closed geodesic $\beta$ is written $l(\beta).$ The total length of the boundary of $M$ is then $l(\Gamma)=\sum l(\gamma_i).$

Each vector $v$ in the unit tangent bundle $T_1S$ uniquely determines a geodesic ray $\alpha_v:[0,\infty)\rightarrow S$ which is the projection to $S$  of the forward orbit $\{G_t(v)\,|\,t\in [0,\infty)\},$ of the geodesic flow on the unit tangent bundle. If $\alpha_v:=\alpha_v([0,\infty))$ intersects the boundary $\Gamma$ of $M$ infinitely often, then there is a sequence of pairs of parameter values $\{(t_i,s_i)\}_{i=1}^{\infty}$ with $t_i<s_i<t_{i+1}$, so that 
\beq\label{alpha}
\bigcup_{i=1}^{\infty}\alpha_v[(t_i,s_i)]=\alpha_v\cap M.
\eeq
In other words, $\alpha_v$ meets the set $M$ in precisely the arcs $\alpha_v[(t_i,s_i)]$ of $\alpha_v.$ We shall refer to these arcs as the excursions of $\alpha_v$ into $M$ and to the sequence of parameters as the excursion parameters of $\alpha_v.$
  Let $\#X$ denote the cardinality of the set $X$. $\text{area}(T)$ shall denote the hyperbolic area of $T\subset S$.
 Area is measured in the hyperbolic metric.  

Then the asymptotic average rate at which a geodesic returns to  the set $M$ is given as follows.

\begin{thm}\label{return0}
For almost all $v\in T_1S$  the excursion parameters of $\alpha_v$ satisfy
\beq\label{average}
\d{ \lim_{t\rightarrow\infty}\frac{1}{t} \#\{ n\,|\, t_n<t\}=\frac{l(\Gamma)}{\pi\, \text{\em area} (S)}.}\hskip 1.7in
\eeq
\end{thm}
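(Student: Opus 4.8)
The plan is to recognize $\#\{n\mid t_n<t\}$ as an orbit–counting statistic for a cross‑section of the geodesic flow and then to invoke the Birkhoff ergodic theorem together with Kac's return‑time formula. Concretely, let $\Sigma\subset T_1S$ be the set of unit vectors based at a point of $\Gamma$ and pointing transversally into $M$, parametrized by pairs $(p,\theta)$ where $p$ runs over $\Gamma$ with respect to arc length and $\theta\in(0,\pi)$ is the angle the vector makes with the positively oriented tangent to $\Gamma$. By the definition of the excursion parameters in \eqref{alpha}, the integer $\#\{n\mid t_n<t\}$ is exactly the number of $s\in[0,t]$ with $G_s(v)\in\Sigma$, provided $\alpha_v$ meets $\Gamma$ only transversally; it therefore counts the visits of the orbit of $v$ to $\Sigma$.

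The first technical point is to check that the hypotheses making this identification valid hold for almost every $v$. Since the geodesic flow on $T_1S$ is ergodic, almost every orbit equidistributes with respect to normalized Liouville measure $m$; as $\Gamma\neq\emptyset$ forces $\operatorname{area}(M)>0$ and $\operatorname{area}(S\setminus M)>0$, such an orbit spends a positive proportion of time on either side of $\Gamma$ and so crosses $\Gamma$ infinitely often, whence its orbit meets $\Sigma$ infinitely often. Moreover the set of vectors ever tangent to $\Gamma$ is the flow–saturation of a set of measure zero in $T_1S$, hence is $m$–null. Thus for $m$–a.e.\ $v$ the orbit of $v$ meets $\Gamma$ infinitely often and always transversally.

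Next I would compute the measure $\bar\mu$ that the disintegration of $m$ along the flow induces on $\Sigma$. Working in Fermi coordinates $(p,r)$ along the geodesic $\Gamma$, in which the hyperbolic metric is $dr^2+\cosh^2r\,dp^2$, a short Jacobian computation for the map $(p,\theta,s)\mapsto G_s(p,\theta)$ shows that $m=\sin\theta\,d\ell(p)\,d\theta\,ds$ near $\Sigma$, so that $d\bar\mu=\sin\theta\,d\ell(p)\,d\theta$; equivalently, this is the standard flux (Santaló) density of Liouville measure across $\Gamma$. Consequently $\bar\mu(\Sigma)=l(\Gamma)\int_0^\pi\sin\theta\,d\theta=2\,l(\Gamma)$, while $m(T_1S)=2\pi\,\operatorname{area}(S)$.

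Finally I would apply the ergodic theorem to the first–return map $P\colon\Sigma\to\Sigma$ with first–return time $R$. Because the flow is ergodic and $\Sigma$ meets almost every orbit, $P$ is ergodic for the normalized measure $\bar\mu/\bar\mu(\Sigma)$, and Kac's formula gives $\int_\Sigma R\,d\bar\mu=m(T_1S)$. Birkhoff's theorem applied to $R$ then yields, for $\bar\mu$–a.e.\ $w\in\Sigma$, $\frac1N\sum_{k=0}^{N-1}R(P^kw)\to m(T_1S)/\bar\mu(\Sigma)$; squeezing $t$ between consecutive return times shows the number $N(t,w)$ of returns up to time $t$ satisfies $N(t,w)/t\to\bar\mu(\Sigma)/m(T_1S)=l(\Gamma)/(\pi\,\operatorname{area}(S))$. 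Transferring this from $\Sigma$ to $T_1S$ is routine: for $m$–a.e.\ $v$ the orbit first meets $\Sigma$ at a finite time $s_0$, and $\#\{n\mid t_n<t\}$ differs from the return count of $G_{s_0}(v)$ by at most one, so the limit is unchanged, giving \eqref{average}. I expect the only delicate steps to be the two ``almost every'' reductions — ruling out tangential crossings and non‑recurrent orbits — and pinning down the $\sin\theta$ weight in $\bar\mu$, since that factor is precisely what produces the constant $2$, and hence the $\pi$, in the final answer.
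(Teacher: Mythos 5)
Your argument is correct, and while it is built on the same underlying object as the paper --- the cross-section of unit vectors based on $\Gamma$ pointing into $M$ --- the machinery you use is genuinely different. The paper never passes to the first-return map: it thickens the section in the flow direction, computes the Liouville measure of the thickened set by an explicit integral in the $(x,y,t)$-coordinates on $T_1\HH$ (Proposition \ref{thick}, giving $\mu(\mathcal{L}_{\varepsilon}(\gamma))=\varepsilon\, l(\gamma)/(\pi\,\mathrm{area}(S))$), and then sandwiches $\varepsilon\,\#\{n\mid t_n<t\}$ between integrals of the characteristic function of the thickened section, so that only the continuous-time Birkhoff theorem is needed. You instead compute the transverse flux density $\sin\theta\,d\ell\,d\theta$ (Santal\'o) to get $\bar\mu(\Sigma)=2\,l(\Gamma)$ against $m(T_1S)=2\pi\,\mathrm{area}(S)$, then invoke the Ambrose--Kac formula and the ergodic theorem for the induced map, and finally invert return-time averages into a counting rate. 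Your route is more conceptual and yields the mean return time $\pi\,\mathrm{area}(S)/l(\Gamma)$ directly (which the paper only extracts afterwards, in the Remark following Corollary \ref{return1}), but it leans on machinery whose hypotheses --- measurability of the section, a.e.\ well-defined return map, invariance and ergodicity of the induced measure, integrability of $R$, and the transfer of the a.e.\ statement from $\Sigma$ back to $T_1S$ via the special-flow representation --- you assert rather than verify; the paper's thickened-section trick deliberately sidesteps all of this at the cost of one explicit integral. Two small remarks: tangential crossings are even less of an issue than you suggest, since a geodesic tangent to the closed geodesic $\Gamma$ coincides with a component of it, so only countably many (hence measure zero in $v$) orbits are affected; and the paper's Theorem \ref{game} is slightly more general than what you prove, since it counts directed crossings for an arbitrary finite list of geodesics with chosen half-collars, with no subsurface $M$ required.
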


 Since geodesics are parameterized by arc length, the length of the arc $\alpha[(t_i,s_i)]$ is $s_i-t_i$. One consequence of the theorem is  the value for the asymptotic average  length of an excursion.  

\begin{cor}\label{return1}
For almost all $v\in T_1S$,  
 \beq\label{part1}
  \lim_{n\rightarrow\infty}\frac{1}{n}\sum_{i=1}^{n} (s_i-t_i)= \frac{\pi\,\text{\em area} (M)}{l(\Gamma)}.\hskip 1.5in
\eeq
\end{cor}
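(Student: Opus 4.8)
The plan is to deduce this from Theorem~\ref{return0} together with the Birkhoff ergodic theorem for the geodesic flow applied to the indicator of $T_1M$, the point being that the total time $\alpha_v$ spends in $M$ up to time $t$ is both (approximately) the sum of the first $\#\{n\mid t_n<t\}$ excursion lengths and (by ergodicity) asymptotic to $\frac{\operatorname{area}(M)}{\operatorname{area}(S)}\,t$. I would work throughout on the full-measure set of $v\in T_1S$ on which the conclusion of Theorem~\ref{return0} holds (so in particular $\alpha_v$ meets $\Gamma$ infinitely often and the excursion decomposition \eqref{alpha} is valid, with $s_N\to\infty$), and on which the ergodic average below converges.

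Let $f=\mathbf 1_{T_1M}$, the characteristic function of the set of unit tangent vectors with base point in $M$; since the Liouville probability measure $\mu$ on $T_1S$ disintegrates over $S$ as hyperbolic area times the normalized Lebesgue measure on the circle fibers, and $\Gamma$ is $\mu$-null, $\mu(T_1M)=\operatorname{area}(M)/\operatorname{area}(S)$. Ergodicity of the geodesic flow \cite{nicholls} and the ergodic theorem for flows give, for a.e.\ $v$,
\[
\frac1t\int_0^t f(G_s v)\,ds\;\longrightarrow\;\frac{\operatorname{area}(M)}{\operatorname{area}(S)}.
\]
Because the base point of $G_sv$ is $\alpha_v(s)$, \eqref{alpha} shows that $f(G_sv)=1$ exactly for $s\in\bigcup_i(t_i,s_i)$, off a set of $s$ of measure zero. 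Evaluating the integral at $t=s_N$ — which is what makes the bookkeeping clean, since at that instant no excursion is in progress, so there is no partial-excursion remainder — yields $\int_0^{s_N}f(G_sv)\,ds=\sum_{i=1}^N(s_i-t_i)$, and since $s_N\to\infty$ the displayed limit gives
\[
\frac1{s_N}\sum_{i=1}^N(s_i-t_i)\;\longrightarrow\;\frac{\operatorname{area}(M)}{\operatorname{area}(S)}.
\]

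It then remains to control $s_N/N$. The counting functions $t\mapsto\#\{n\mid s_n<t\}$ and $t\mapsto\#\{n\mid t_n<t\}$ differ by at most $1$ (one extra unit precisely when $t$ lies inside an excursion), so by Theorem~\ref{return0} the former is also asymptotic to $\frac{l(\Gamma)}{\pi\operatorname{area}(S)}\,t$; taking $t=s_N$ and using $\#\{n\mid s_n<s_N\}=N-1$ gives $N/s_N\to\frac{l(\Gamma)}{\pi\operatorname{area}(S)}$, i.e.\ $s_N/N\to\frac{\pi\operatorname{area}(S)}{l(\Gamma)}$. Multiplying the two limits,
\[
\frac1N\sum_{i=1}^N(s_i-t_i)=\frac{s_N}{N}\cdot\frac1{s_N}\sum_{i=1}^N(s_i-t_i)\;\longrightarrow\;\frac{\pi\operatorname{area}(S)}{l(\Gamma)}\cdot\frac{\operatorname{area}(M)}{\operatorname{area}(S)}=\frac{\pi\operatorname{area}(M)}{l(\Gamma)},
\]
which is \eqref{part1} after renaming $N$ as $n$. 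I do not anticipate a serious obstacle: the genuinely substantive input is the excursion count supplied by Theorem~\ref{return0}, and the only places requiring care are the reduction from a time average to an average over excursion indices (handled by restricting to the subsequence of times $t=s_N$ so that incomplete excursions contribute nothing) and the identification $\mu(T_1M)=\operatorname{area}(M)/\operatorname{area}(S)$.
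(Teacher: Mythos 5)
Your argument is correct and is essentially the paper's own proof: both write $\sum_{i=1}^{n}(s_i-t_i)$ as $\int_0^{s_n}\charf_{T_1M}(G_\tau(v))\,d\tau$, factor the average as the Birkhoff average times $s_n/n$, and identify the two limits via the Ergodic Theorem and Theorem~\ref{return0} respectively. Your extra care in relating $\#\{n\mid s_n<t\}$ to $\#\{n\mid t_n<t\}$ is a minor tightening of the same bookkeeping.
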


\begin{rem}
{\em The average distance between the starting points of the first $n+1$ excursions is computed by the sum 
\beqq
\frac{1}{n}\sum_{i=1}^n t_{i+1}-t_{i}=\frac{t_{n+1}-t_1}{n}.
\eeqq
Since $n$ is essentially $\#\{ i\,|\, t_i\leq t_n\}$, by   taking the limit as $n\rightarrow\infty$, we see that the average distance between consecutive excursions is generically equal to the inverse of the value for the limit in (\ref{average}), }$\d{\frac{\pi\, \text{area} (S)}{l(\Gamma)}.}$
\end{rem}

\subsection{Collar neighborhoods}
Let $\gamma$ be a simple closed geodesic on $S.$    Let $\bf{C}_r(\gamma)=\bf{C}_r$ be the   collar neighborhood of  $\gamma$. This is the set of points within a fixed distance $r$ of $\gamma$. If $r$ is not too large ($r<R_{l(\gamma)}=\log\coth\frac{l(\gamma)}{4}$, see Section \ref{norm}), the $r$-collar about $\gamma$ is divided by $\gamma$ into disjoint open cylinders denoted   $A_r$ and $B_r$, which we will call half-collars. If a subsurface $M$ is specified, then we shall suppose that $A_r \subset M.$ Let $\lambda_{A_r}=\partial{\bf C_r}\cap\partial A_r$ and  $\lambda_{B_r}=\partial{\bf C_r}\cap\partial B_r $, where $\partial D$ denotes the boundary of $D$ in $S$.
The values $r$, $l(\gamma)$ and area($\bf{C}_r$) are not independent but rather,  the width of the collar and the length of $\gamma$ determine one another and the two of them determine the area of the collar.  This will be made precise in Section \ref{phi}, Proposition \ref{prop2}.

We shall suppose the geodesic ray  $\alpha_v$, determined by the direction $v$ in $T_1S$, intersects $\bf{C}_r$ infinitely often. Let $P_r=\{(t_i,s_i)| i\in\ZZ^+\}$ and $Q_r=\{(\rho_i,\eta_i)| i\in\ZZ^+ \}$ be sequences of pairs of parameter values  with $t_i<s_i<t_{i+1}$, $\rho_i<\eta_i<
\rho_{i+1}$, so that $\alpha_v(t_i)\in\lambda_{A_r}$, $\alpha_v(\rho_i)\in\lambda_{B_r}$, $\{ \alpha_v(s_i),\alpha_v(\eta_i)\}\subset\{\lambda_{A_r},\lambda_{B_r}\}$ and 
\beq\label{alpha}
\bigcup_{P_r}\alpha_v[(t_i,s_i)]\cup\bigcup_{Q_r}\alpha_v[(\rho_i,\eta_i)]=\alpha_v\cap\bf{C}_r.
\eeq
In other words, these parameters determine the segments of $\alpha_v$ in $\bf{C}_r$  with both endpoints on its boundary. We will focus on those that enter $\bf{C}_r$ at $\lambda_{A_r}$. $P_r$ is called the set of excursion parameters. 
Let $P'_r=\{(t'_i,s'_i)\}$ be the subset of excursion parameters, reindexed by consecutive numbers,  where both values  $\alpha_v(t'_i)$ and $\alpha_v(s'_i)$  lie in $\lambda_{A_r}$.

\begin{thm}\label{collar0}
For $r<R_{l(\gamma)}$ and almost all $v\in T_1S$  
\beq\label{collar1}
 {\lim_{t\rightarrow\infty}\frac{1}{t}\#\{ n\,|\, t_n<t\} =\frac{l(\gamma)\cosh r }{\pi\,\text{\em area}(S)} =\frac{\sqrt{l(\gamma)^2+(\frac{1}{2}\text{\em area}({\bf C_r}))^2}}{ \pi\, \text{\em area} (S)}}  \hskip  .7in
  \eeq
 \beq\label{collar2}
 {\lim_{t\rightarrow\infty}\frac{1}{t}\#\{ n\,|\, t'_n<t\} =\frac{l(\gamma)(\cosh r -1)}{\pi\,\text{\em area}(S)} =
 \frac{\sqrt{l(\gamma)^2+(\frac{1}{2}\text{\em area}({\bf C_r}))^2}-l(\gamma)}{ \pi\, \text{\em area} (S)}}    
 \eeq
 \end{thm}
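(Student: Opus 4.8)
The plan is to derive both limits from a single ingredient, the \emph{crossing-rate principle}, which is the technical heart of Theorem \ref{return0} and which holds for an arbitrary rectifiable curve on $S$: if $c$ carries a chosen coorientation, then for almost every $v\in T_1S$,
\[
\lim_{t\to\infty}\frac1t\,\#\{\,\text{positive crossings of }c\text{ by }\alpha_v|_{[0,t]}\,\}=\frac{l(c)}{\pi\,\text{area}(S)}.
\]
This follows from ergodicity of the geodesic flow: the set $\Sigma_c^{+}\subset T_1S$ of unit vectors based on $c$ and pointing to the positive side is a local cross-section, the disintegration of Liouville measure along the flow assigns it the flux mass $\int_0^{l(c)}\!\!\int_0^{\pi}\sin\phi\;d\phi\,ds=2\,l(c)$ (with $\phi$ the angle the direction makes with $c$), and dividing by the total Liouville mass $2\pi\,\text{area}(S)$ — with a shrinking-flow-box argument and the Birkhoff ergodic theorem converting the time average into a count of hits, as in \cite{haas} and \cite{strat} — gives the stated rate. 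Nothing here uses that $c$ is a geodesic, so it applies equally to the hypercycles $\lambda_{A_r},\lambda_{B_r}$ and to $\gamma$.

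For (\ref{collar1}): for a.e.\ $v$ the ray $\alpha_v$ is dense and meets $\lambda_{A_r}$ transversally, so the excursion parameters $t_n$ are, by definition, in exact bijection with the crossings of $\lambda_{A_r}$ in the direction entering $\mathbf C_r$; hence $\#\{n\mid t_n<t\}\sim \frac{l(\lambda_{A_r})}{\pi\,\text{area}(S)}\,t$. In the normalized collar coordinates of Section \ref{norm} — the Fermi coordinates $d\rho^2+\cosh^2\!\rho\,dt^2$ on $\mathbf C_r$ with $\gamma=\{\rho=0\}$ and $\lambda_{A_r}=\{\rho=r\}$ — one reads off $l(\lambda_{A_r})=l(\gamma)\cosh r$ and $\text{area}(\mathbf C_r)=2\,l(\gamma)\sinh r$, whence $l(\gamma)^2+\bigl(\tfrac12\text{area}(\mathbf C_r)\bigr)^2=l(\gamma)^2\cosh^2 r$; by Proposition \ref{prop2} the two right-hand sides of (\ref{collar1}) are exactly these algebraic rewrites.

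For (\ref{collar2}): I would first establish the dichotomy that every excursion in $P_r$ either stays in the half-collar $A_r$ without meeting $\gamma$, or meets $\gamma$ exactly once and then leaves through $\lambda_{B_r}$. Since $r<R_{l(\gamma)}$ the collar is embedded and $\pi_1(\mathbf C_r)\hookrightarrow\pi_1(S)$, so an excursion arc lifts to a single arc in the universal cover of $\mathbf C_r$; there $\gamma$ has a unique lift, which any complete geodesic meets at most once, giving the claim. Thus $P'_r$ is precisely the subfamily of $\gamma$-avoiding excursions in $P_r$, and the complementary ones are in bijection with the crossings of $\gamma$ in the direction from the $A$-side to the $B$-side: such an excursion contains exactly one such crossing, and conversely a unit vector at $\gamma$ arriving from $A_r$ lies in an excursion which, traced backward (crossing $\gamma$ no second time), must exit $\mathbf C_r$ through $\lambda_{A_r}$. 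The crossing-rate principle applied to $\gamma$ gives $\#\{A\!\to\!B\text{ crossings of }\gamma\text{ before }t\}\sim\frac{l(\gamma)}{\pi\,\text{area}(S)}\,t$, and since at each instant at most one excursion is in progress, the count of $A\!\to\!B$ excursions begun before $t$ and the count of $A\!\to\!B$ crossings before $t$ differ by at most $1$. Subtracting from (\ref{collar1}),
\[
\#\{n\mid t'_n<t\}\sim\Bigl(\tfrac{l(\gamma)\cosh r}{\pi\,\text{area}(S)}-\tfrac{l(\gamma)}{\pi\,\text{area}(S)}\Bigr)t=\frac{l(\gamma)(\cosh r-1)}{\pi\,\text{area}(S)}\,t,
\]
and the radical form follows from $l(\gamma)\cosh r-l(\gamma)=\sqrt{l(\gamma)^2+(\tfrac12\text{area}(\mathbf C_r))^2}-l(\gamma)$. (Equivalently one may stay on the cross-section: backward flow identifies, off a null set, the vectors of $\Sigma^{+}_{\lambda_{A_r}}$ whose excursion reaches $\gamma$ with the vectors of $\Sigma^{+}_{\gamma}$ pointing toward the $B$-side, and flux-invariance performs the subtraction at the level of measures.)

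The step I expect to be the real obstacle is making the crossing-rate principle rigorous as an assertion about the asymptotic number of \emph{discrete} crossing events: fixing the cross-sectional flux measure and computing its total mass $2\,l(c)$, and passing from the ergodic time average to the counting limit while discarding the null set of $v$ for which $\alpha_v$ is tangent to, or not dense relative to, $\lambda_{A_r},\lambda_{B_r}$ or $\gamma$. The remaining bookkeeping — the exact bijections up to the $O(1)$ straddling term, and the embedded-collar lifting argument for the dichotomy — is then routine, and if the proof of Theorem \ref{return0} already isolates the crossing-rate principle at this generality, what is left is essentially the collar geometry recorded in Section \ref{norm} and Proposition \ref{prop2}.
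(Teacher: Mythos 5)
Your proof is correct in outline, but it reaches Theorem \ref{collar0} by a genuinely different route than the paper. The paper never uses the length of the hypercycle $\lambda_{A_r}$: it introduces the sections $\mathcal{J}^{i}$ over a lift $\mathbf{s}$ of $\lambda_{A_r}$, evaluates $\mu(\mathcal{J}^1_{\varepsilon}\cup\mathcal{J}^3_{\varepsilon})$ by an explicit three-case integral in $(x,y,t)$-coordinates (Lemma \ref{limits}, Proposition \ref{last prop}), identifies $\mu(\mathcal{J}^3_{\varepsilon})=\mu(\mathcal{L}_{\varepsilon}(\gamma))$ by the same ``every $A$-side crossing of $\gamma$ is preceded by a crossing of $\lambda_{A_r}$'' correspondence you invoke, uses the isometry $z\mapsto\zeta/\overline{z}$ to get $\mu(\mathcal{J}^1_{\varepsilon})=\mu(\mathcal{J}^2_{\varepsilon})$, and then runs the thickened-section inequality (\ref{for proof second thm}) for $\mathcal{J}^0$ and for $\mathcal{J}^1\cup\mathcal{J}^2$; thus (\ref{collar2}) is obtained by subtracting section measures, whereas you subtract counting functions. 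Your route replaces Proposition \ref{last prop} by a Crofton/flux principle for a cooriented curve together with $l(\lambda_{A_r})=l(\gamma)\cosh r$, which is more conceptual --- it explains the factor $\cosh r$ as a boundary length, in exact parallel with Theorem \ref{return0} --- but it asks for more than the paper proves: the proof of Theorem \ref{return0} computes the flux only over geodesic base curves, and although the flux density $\sin\phi\,d\phi\,ds$ is indeed independent of the curvature of the base curve (so the mass $2\,l(c)$ is right), the passage from the thickened section to the count via an inequality like (\ref{for proof second thm}) requires consecutive hits of the section to be more than $\varepsilon$ apart, which can fail for a general rectifiable curve since crossings may cluster near tangencies; so your ``arbitrary rectifiable $c$'' statement needs an extra limiting argument that is not in the paper. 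For the curves you actually use this is harmless, and you should make the reason explicit rather than calling it routine: for $r<R_{l(\gamma)}$ distinct lifts of $\mathbf{C}_r$ are at distance at least $R_{l(\gamma)}-r$ from one another and a geodesic meets a fixed lift of $\lambda_{A_r}$ at most twice, so consecutive entry times $t_i$ are uniformly separated and any $\varepsilon<R_{l(\gamma)}-r$ makes the counting inequality valid verbatim. With that separation noted, your once-through-$\gamma$ dichotomy (via the unique lift of $\gamma$ in the wedge covering $\mathbf{C}_r$, which is also how the paper justifies $\mu(\mathcal{J}^3_{\varepsilon})=\mu(\mathcal{L}_{\varepsilon}(\gamma))$) and the $O(1)$ bookkeeping give (\ref{collar2}) by subtraction, and Proposition \ref{prop2} supplies the algebraic identity $l(\gamma)\cosh r=\sqrt{l(\gamma)^2+(\tfrac12\,\text{area}(\mathbf{C}_r))^2}$ exactly as you state.
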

 
 In this case the analogue of Corollary \ref{return1}, the asymptotic average length of an excursion into a collar neighborhood, is given by the following. 
 
 \begin{cor}\label{cor2part1}
 For $r<R$ and almost all $v\in T_1S$ 
  \beq
  \lim_{n\rightarrow\infty}\frac{1}{n}\sum_{i=1}^{n} (s_i-t_i)= 2\pi\tanh r=\frac{2\pi\, \text{\em area}(\bf{C}_r)}{\sqrt{l(\gamma)^2 + \text{\em area}(\bf{C}_r)^2}}.  \hskip .5in
\eeq
 
\end{cor}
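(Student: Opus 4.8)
The corollary asks for the asymptotic average length of an excursion into the collar $\mathbf{C}_r$, where the relevant excursion parameters $(t_i,s_i)$ are those entering through $\lambda_{A_r}$. Let me think about what ingredients are available and how to chain them.

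We have Theorem \ref{collar0}, which gives the asymptotic return rate: $\lim_{t\to\infty}\frac1t\#\{n: t_n<t\} = \frac{l(\gamma)\cosh r}{\pi\,\mathrm{area}(S)}$. And we have the ergodic theorem telling us the average time spent in $\mathbf{C}_r$ equals the relative area. Now the key point: each excursion counted by $P_r$ is a segment of $\alpha_v$ inside $\mathbf{C}_r$ with both endpoints on $\partial\mathbf{C}_r$, entering at $\lambda_{A_r}$. The total time spent inside $\mathbf{C}_r$ up to time $t$, divided by $t$, tends to $\mathrm{area}(\mathbf{C}_r)/\mathrm{area}(S)$ by the ergodic theorem for the geodesic flow (with respect to Liouville measure).

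So the strategy mirrors the relation between Theorem \ref{return0} and Corollary \ref{return1}. Write $\frac1n\sum_{i=1}^n(s_i-t_i)$. The numerator $\sum_{i=1}^n(s_i-t_i)$ is the total time $\alpha_v$ spends in $\mathbf{C}_r$ during the first $n$ excursions through $\lambda_{A_r}$ — BUT here is a subtlety: excursions counted by $P_r$ (entering at $\lambda_{A_r}$) are not all the excursions into $\mathbf{C}_r$; there are also those in $Q_r$ entering at $\lambda_{B_r}$. So the numerator does not capture all the time in $\mathbf{C}_r$. I would resolve this by symmetry: by the symmetry of the collar (or rather, by noting that the return rates at $\lambda_{A_r}$ and $\lambda_{B_r}$ are equal — both collar boundary components are isometric circles bounding half-collars, and the geodesic flow is reversible), the fraction of time in $\mathbf{C}_r$ attributable to $A_r$-excursions plus the fraction attributable to $B_r$-excursions accounts for all of $\mathrm{area}(\mathbf{C}_r)/\mathrm{area}(S)$. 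Actually, a cleaner approach: restrict attention to $A_r$ and $B_r$ separately. A segment entering at $\lambda_{A_r}$ may pass through $A_r$ only, or through $A_r$, cross $\gamma$, and exit via $B_r$ — it need not stay in a half-collar. So the cleanest accounting is at the level of the full collar $\mathbf{C}_r$: time spent in $\mathbf{C}_r$ is split among all excursion segments, those starting at $\lambda_{A_r}$ and those starting at $\lambda_{B_r}$, and by reversibility/symmetry of the flow the two classes contribute equally on average and each class occurs at rate $\frac{l(\gamma)\cosh r}{\pi\,\mathrm{area}(S)}$.

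Thus I would argue: $\frac{1}{t}\sum_{\{i:\,t_i<t\}}(s_i-t_i) + \frac{1}{t}\sum_{\{i:\,\rho_i<t\}}(\eta_i-\rho_i) \to \frac{\mathrm{area}(\mathbf{C}_r)}{\mathrm{area}(S)}$, up to a boundary-term error that is $o(t)$ (the at-most-one excursion straddling time $t$ has bounded length since $\mathbf{C}_r$ is precompact with a bound on geodesic chord length — or simply note excursion lengths grow sublinearly so the truncation error is negligible). By symmetry each sum contributes half, so $\frac1t\sum_{\{i:\,t_i<t\}}(s_i-t_i)\to \frac{\mathrm{area}(\mathbf{C}_r)}{2\,\mathrm{area}(S)}$. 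Dividing by $\frac1t\#\{n:t_n<t\}\to\frac{l(\gamma)\cosh r}{\pi\,\mathrm{area}(S)}$ gives $\lim_n\frac1n\sum_{i=1}^n(s_i-t_i) = \frac{\mathrm{area}(\mathbf{C}_r)\pi}{2\,l(\gamma)\cosh r}$. Finally I would use Proposition \ref{prop2} (the relation $\mathrm{area}(\mathbf{C}_r) = 2\,l(\gamma)\sinh r$, which also yields $\sqrt{l(\gamma)^2+\mathrm{area}(\mathbf{C}_r)^2} = l(\gamma)\sqrt{1+4\sinh^2 r}$ — wait, I should double-check the constant against \eqref{collar1} where $\sqrt{l(\gamma)^2+(\tfrac12\mathrm{area}(\mathbf{C}_r))^2}=l(\gamma)\cosh r$, so $\tfrac12\mathrm{area}(\mathbf{C}_r)=l(\gamma)\sinh r$) to rewrite $\frac{\pi\,\mathrm{area}(\mathbf{C}_r)}{2l(\gamma)\cosh r} = \frac{\pi\cdot 2l(\gamma)\sinh r}{2l(\gamma)\cosh r} = 2\pi\tanh r$, matching the claimed first expression; and substituting $l(\gamma)\cosh r = \sqrt{l(\gamma)^2+\mathrm{area}(\mathbf{C}_r)^2}$... here I must be careful that the second stated form uses $\mathrm{area}(\mathbf{C}_r)$ not $\tfrac12\mathrm{area}(\mathbf{C}_r)$, which forces the relation $2l(\gamma)\sinh r$ vs $l(\gamma)\sinh r$ — I would reconcile these using the precise statement of Proposition \ref{prop2} rather than guessing.

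The main obstacle is the symmetry/accounting argument showing that the $P_r$-excursions capture exactly half the total collar-time: one must verify that the geodesic flow visits the two boundary circles $\lambda_{A_r}$ and $\lambda_{B_r}$ at equal asymptotic rates (immediate from Theorem \ref{collar0} applied to each, or from the isometry exchanging the half-collars if it exists, or from time-reversal symmetry of Liouville measure) and, more delicately, that the average length of an $A_r$-initiated excursion equals that of a $B_r$-initiated one — this last equality is what licenses splitting the total time evenly, and it follows from the reversibility of the flow together with the fact that reversing a $\lambda_{A_r}$-to-$\lambda_{B_r}$ segment gives a $\lambda_{B_r}$-to-$\lambda_{A_r}$ segment of the same length, so the multiset of excursion lengths is symmetric under interchange of the roles of the two boundary components. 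Handling the truncation/boundary term (the partial excursion in progress at time $t$) is routine given that $\mathbf{C}_r$ has compact closure, so I would dispatch it in a sentence.
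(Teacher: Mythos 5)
Your route differs from the paper's at exactly one point, and that point is where all the trouble lives. The paper's proof simply writes $\sum_{i=1}^{n}(s_i-t_i)=\int_0^{s_n}\chi_{T_1\mathbf{C}_r}(G_{\tau}(v))\,d\tau$ and applies the Ergodic Theorem together with Theorem \ref{collar0}, obtaining $\frac{\text{area}(\mathbf{C}_r)}{\text{area}(S)}\times\frac{\pi\,\text{area}(S)}{l(\gamma)\cosh r}=2\pi\tanh r$; that is, it attributes \emph{all} of the time spent in $\mathbf{C}_r$ to the $P_r$-excursions, and no factor of $\tfrac12$ ever appears. You instead observe --- correctly, on the definitions given, since the display defining $P_r$ and $Q_r$ states that the $P_r$- and $Q_r$-segments together exhaust $\alpha_v\cap\mathbf{C}_r$ --- that the excursions entering through $\lambda_{B_r}$ also contribute to that integral, and you split the total evenly by a symmetry/time-reversal argument. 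That extra bookkeeping is the genuinely different, and more scrupulous, part of your proposal.

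But your last line does not compute: $\frac{\pi\,\text{area}(\mathbf{C}_r)}{2\,l(\gamma)\cosh r}=\frac{\pi\cdot 2l(\gamma)\sinh r}{2\,l(\gamma)\cosh r}=\pi\tanh r$, not $2\pi\tanh r$. Carried out honestly, your argument yields half the stated constant, and the terminal ``$=2\pi\tanh r$'' is an arithmetic slip inserted to hit the target. You cannot have it both ways: either the $Q_r$-excursions may be ignored (which is what the paper's proof implicitly does, and which is exactly the step you rightly refused to take without justification), or the limit is $\pi\tanh r$. The tension is real, not an artifact of your write-up: you already noticed that the second expression in the corollary agrees with $2\pi\tanh r$ only if $\text{area}(\mathbf{C}_r)$ is read as $l(\gamma)\sinh r$ rather than the $2l(\gamma)\sinh r$ of Proposition \ref{prop2} (contrast the $\tfrac12\,\text{area}(\mathbf{C}_r)$ that appears in Theorem \ref{collar0}); and the introduction's own analogy with cusp excursions, whose average duration is $\pi$, is recovered in the limit $r\to\infty$ by $\pi\tanh r$ but not by $2\pi\tanh r$. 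So as written the proposal does not establish the displayed identity; the missing piece is a definitive resolution of the factor of two governing how the collar time is shared between the two families of excursions, and you should state plainly which side of that factor you believe rather than absorbing it into the algebra.
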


 \subsection{The distribution of collar excursions}

In \cite{haas3} the number theoretic distributions from the Doeblin-Lenstra Conjecture {\cite{bjw}},{\cite {dk}} and the related Theorem of Bosma 
\cite{bosma} were reinterpreted and expanded in a geometric setting. The analogous notions can be defined in terms of the distribution of the depths of maximal penetration of geodesic excursions into a collar neighborhood of a simple closed geodesic. Given a geodesic $\alpha_v$ with excursion parameters $P_r$,  define $D_v(i)=\inf\{\text{ dist}(\alpha_v(t),\gamma)\,|\, t_i<t<s_i\}$, where $\text{ dist}(c,\gamma)$ is the distance between the point c and the geodesic $\gamma$. 
This is the depth of the $i^{th}$ excursion. Using Theorem \ref{collar0} we  show, in the next corollary, that the depths are nicely distributed, in a fashion that is independent of $l(\gamma)$. Choose  $R_0< R_{l(\gamma)}$. 

\begin{cor}\label{bosma}
For $r\leq R_0$ and almost all $v\in T_1S$ the limit 
\beq
\lim_{n\rightarrow\infty}\frac{1}{n}\#\{i\,|\, 1\leq i \leq n, D_v(i)\leq r\}
\eeq 
is defined and takes the value
\beq
\delta_v(r)=\frac{\cosh r}{\cosh R_{0}}.
\eeq
\end{cor}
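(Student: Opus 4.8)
\emph{Proof plan.} The plan is to identify the excursions of $P_r$ with exactly the ``deep'' excursions into the larger collar ${\bf C}_{R_0}$, and then to feed Theorem \ref{collar0} into both collars. First I would establish the geometric correspondence. Fix $r\le R_0<R_{l(\gamma)}$, so that ${\bf C}_r\subset{\bf C}_{R_0}$ are both embedded, with $A_r\subset A_{R_0}$. Lifting to $\HH$ and using that the metric $r$-neighborhood of a geodesic line is convex (bounded by two hypercycles), any excursion $\alpha_v[(t_i,s_i)]$ of $P_{R_0}$ --- a geodesic arc with endpoints on $\partial{\bf C}_{R_0}$ entering through $\lambda_{A_{R_0}}$ --- meets ${\bf C}_r$ in at most one subarc, and that subarc meets $\gamma$ at most once. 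If $D_v(i)<r$, the subarc is nonempty; and since the excursion starts on the $A$ side while $t\mapsto\text{dist}(\alpha_v(t),\gamma)$ is unimodal along a geodesic, to reach distance $<r$ without first crossing $\gamma$ one must cross $\lambda_{A_r}$, so the subarc enters ${\bf C}_r$ through $\lambda_{A_r}$ and is an excursion of $P_r$. Conversely, an excursion of $P_r$ enters through $\lambda_{A_r}$, and tracing the geodesic backward it runs through the shell $A_{R_0}\setminus A_r$, hence entered ${\bf C}_{R_0}$ through $\lambda_{A_{R_0}}$ (it could not have crossed $\gamma$ earlier, or the distance to $\gamma$ would already be increasing); so it is the part in ${\bf C}_r$ of a unique excursion of $P_{R_0}$, which has depth $<r$. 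Writing $N(t)=\#\{n:t_n<t\}$ for the counting function of $P_{R_0}$ and $N_r(t)$ for that of $P_r$, this gives an order-preserving bijection between $\{i:D_v(i)<r\}$ and the index set of $P_r$, with the $j$th excursion of $P_r$ lying inside excursion number $N(t^{(r)}_j)$ of $P_{R_0}$.

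Next I would turn this into a count. Set $T_n=t_n$ for $P_{R_0}$; since its entry times strictly interleave its exit times, $\{t:N(t)\le n\}=(-\infty,T_{n+1}]$ and no $t^{(r)}_j$ is an entry time, so the correspondence yields the exact identity
\[
\#\{\,i\le n:D_v(i)<r\,\}=\#\{\,j:N(t^{(r)}_j)\le n\,\}=N_r(T_{n+1}).
\]
Now $N(T_n)=n-1$, and by Theorem \ref{collar0}, for almost every $v$, $N_r(t)/t\to l(\gamma)\cosh r/(\pi\,\text{area}(S))$ and $N(t)/t\to l(\gamma)\cosh R_0/(\pi\,\text{area}(S))$ as $t\to\infty$; in particular $T_n\to\infty$ linearly, so $T_n/T_{n+1}\to1$. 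Dividing and passing to the limit,
\[
\frac1n\#\{\,i\le n:D_v(i)<r\,\}=\frac{N_r(T_{n+1})/T_{n+1}}{\bigl(N(T_n)+1\bigr)/T_{n+1}}\;\longrightarrow\;\frac{\cosh r}{\cosh R_0}=\delta_v(r).
\]
Finally, for almost every $v$ no excursion is tangent to the hypercycle $\lambda_{A_r}$ (the directions whose forward orbit ever touches a fixed hypercycle form a null set), so replacing ``$<r$'' by ``$\le r$'' does not change the count; and by running the argument over all rational $r\in(0,R_0]$ and squeezing a general $r$ between rationals --- using monotonicity of $N_r(T_{n+1})$ in $r$ and continuity of $\cosh r/\cosh R_0$ --- the conclusion holds for every $r\le R_0$ off a single null set.

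I expect the main obstacle to be the geometric step, not the limit computation: one must check that a geodesic arc with endpoints on $\partial{\bf C}_{R_0}$ meets the subcollar ${\bf C}_r$ in a single arc meeting $\gamma$ at most once, and keep careful track of which boundary component each arc enters through, so that the deep excursions of $P_{R_0}$ correspond to precisely $P_r$ --- and to none of the $Q_r$ arcs. Convexity of $r$-neighborhoods of geodesics in $\HH$ together with embeddedness of collars of half-width below $R_{l(\gamma)}$ is what makes this bookkeeping go through cleanly.
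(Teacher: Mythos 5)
Your proposal is correct and takes essentially the same route as the paper: the number of excursions of depth at most $r$ is identified with the counting function $N_v(r)(t)$ of returns to the $r$-collar, and the limit is obtained as the ratio $N_v(r)(t)/N_v(R_0)(t)\rightarrow \cosh r/\cosh R_0$ via Theorem \ref{collar0}. You simply make explicit the convexity/bookkeeping correspondence between deep $R_0$-excursions and $P_r$, and the measure-zero tangency case, which the paper leaves implicit by citing \cite{haas}.
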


 \subsection{The invariant measure}

The unit tangent bundle of the hyperbolic plane $\HH$ can be given as a cartesian product $T_1\HH=\HH\times S^1.$ In these coordinates the natural invariant measure for  the geodesic flow $\tilde{G}$ has the form $\tilde{m}=dAd\phi.$ There is another set of very useful coordinates in which $T_1\HH $ is described, up to measure zero,  as the the set of triples $(x,y,t)\in\RR^3$ with $x\not=y.$ Then $(x,y,t)$ corresponds to the vector $v=\dot{\alpha}(t)\in T_1\HH,$ where  $\alpha$ is the geodesic in $\HH$ oriented from the  endpoint  $\alpha_{-}=x $ to $\alpha_{+}=y$  and parameterized so that $\alpha(0)$ is the Euclidean midpoint of the semicircle $\alpha(\RR)$. The geodesic flow on $T_1\HH$ satisfies $\tilde{G}_s(x,y,t)=(x,y,t+s).$ Furthermore, the geodesic flow on $T_1S$ has the invariant probability measure $\mu$, whose lift to $T_1\HH$ is equal to  \cite{nicholls}
\beq
\tilde{\mu}= \frac{1}{2\pi\,\text{area}(S)}\tilde{m}=\frac{1}{\pi\, \text{area}(S)}(x-y)^{-2}\, dx dy dt.
\eeq

Let $\mathcal{E}\subset T_1S$ be the set of vectors $v$ for which the orbit $G_t(v),\, t>0$ is dense in $T_1S$. 
 
 As a consequence of  the Ergodic Theorem and the Poincar\'e Recurrence Theorem,  \cite{sinai}, $\mathcal{E}$ has full measure in $T_1S$.

 \subsection{The normalization of G}\label{norm}
 
 Given a simple closed geodesic $\gamma$ on $S$ we may suppose that the Fuchsian group $G$ has been normalized so that the imaginary axis $\mathcal{I} $ in $\HH$ covers $\gamma$. Then the stabilizer of $\mathcal{I} $ in $G$ is generated by the transformation $g(z)=\zeta z$ with   $\log \zeta = l(\gamma)$. The arc $\tilde{\gamma}=\{it\,|\, 1\leq t < \zeta\}\subset\mathcal{I}$ is mapped injectively onto $\gamma$ by the covering projection. 
 
 $\bf{C}_r(\gamma)$ is the image of the $r$-neighborhood  $\bf{C}_r(\mathcal{I})$ of $\mathcal{I} $  under the covering projection.    It follows from   the  Collar Lemma (see \cite{ara} and \cite{buser} for the many references)
  
 that there is a value $R_{l(\gamma)}=\log\coth \frac{l(\gamma)}{4}$ so that for  $r< R_{l(\gamma)},$ the collar $ \bf{C}_r(\mathcal{I})$ is mapped disjointly from itself by any $h\in G$ which is not a power of $g$.  Consequently,  $\bf{C}_r(\gamma)$ is a cylinder embedded in $S$, as we have been assuming. Henceforth, we shall suppose that $r<R_{l(\gamma)}.$
  
 The curves $\lambda_{A_r}$ and $\lambda_{B_r}$ lift to straight lines $\tilde{\lambda}_{A_r}$ and $\tilde{\lambda}_{B_r}$ bounding $\bf{C}_r(\mathcal{I})$. They eminate from the origin and make an angle $\phi$ with   $\mathcal{I} $.       We shall further stipulate that the lift of the half-collar $A_r$ in  $\bf{C}_r(\mathcal{I})$ lies in the right half-plane.

\section{Proofs for regions with geodesic boundary}\label{boundary}
\subsection{The single geodesic}\label{boundarya}

Let $\mathcal{L}^*(\gamma)=\mathcal{L}^*(\gamma, A_r)$ be the subset of the unit tangent bundle over   $\gamma$ where  $v\in\mathcal{L}^*(\gamma)\,$ if there exists $\tau_0$ so that  for  $0<t<\tau_0$ the points of $\alpha_v(t)$ lie in $A_r$.  This is the subset of vectors based on $\gamma$ that point into $A_r$. Note that the set does not depend on the choice of $r<R_{l(\gamma)}$. Then $\mathcal{L}(\gamma)=\mathcal{L}^*(\gamma)\cap\mathcal{E}$ is a cross-section for the geodesic flow on $T_1S$, \cite{bks}. In other words, for almost all $v\in T_1S$ there exists an increasing sequence of values $\tau_i$ so that $G_{\tau_i}(v)\in \mathcal{L}(\gamma)$.

Given $\varepsilon<R_{l(\gamma)}-r$, define the $\varepsilon$-thickened section $ \mathcal{L}_{\varepsilon}(\gamma)=\{G_t(v)\,|\, t\in [0,\varepsilon], v\in \mathcal{L}(\gamma)\}.$ Analysis of the thickened section  is the main tool in the proof of Theorem \ref{return0}.

\begin{prop}\label{thick}
\beq
\mu( \mathcal{L}_{\varepsilon}(\gamma))=\frac{\varepsilon\, l(\gamma)}{\pi\, \text{\em area} (S)}
\eeq
\end{prop}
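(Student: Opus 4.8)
The plan is to pass to the upper half-plane, using the normalization of Section~\ref{norm} in which $\mathcal{I}$ covers $\gamma$ and $g(z)=\zeta z$ generates the stabilizer of $\mathcal{I}$, with $\log\zeta = l(\gamma)$. I would work throughout in the $(x,y,t)$ coordinates on $T_1\HH$, in which $\tilde\mu = \frac{1}{\pi\,\text{area}(S)}(x-y)^{-2}\,dx\,dy\,dt$ and the geodesic flow is $\tilde G_s(x,y,t)=(x,y,t+s)$; in these coordinates the flow box $\mathcal{L}_\varepsilon(\gamma)$ becomes completely explicit.

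First I would identify the lift of the cross-section $\mathcal{L}(\gamma)$. Given a geodesic $\alpha$ with $\alpha_-=x$ and $\alpha_+=y$, the real part of $\alpha(t)$ is strictly monotone in $t$, so $\alpha$ meets $\mathcal{I}$ exactly when $x$ and $y$ lie on opposite sides of $0$; in the case $x<0<y$ it crosses $\mathcal{I}$ moving into the right half-plane, at the single point $i\sqrt{-xy}$. Since the lift of the half-collar $A_r$ lies in the right half-plane and $\tilde\gamma=\{it:1\le t<\zeta\}$ is a fundamental arc for $\langle g\rangle$ acting on $\mathcal{I}$, the portion of the lift of $\mathcal{L}(\gamma)$ that is based on $\tilde\gamma$ is, in these coordinates, $\{(x,y,t_0(x,y)) : x<0<y,\ 1\le -xy<\zeta^2\}$, where $t_0(x,y)$ denotes the parameter at which $\alpha$ meets $\mathcal{I}$. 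Flowing for time $[0,\varepsilon]$ merely replaces this single $t$-value by an interval of length $\varepsilon$, so the lift of $\mathcal{L}_\varepsilon(\gamma)$ over $\tilde\gamma$ is the set
\beqq
E=\{(x,y,s) : x<0<y,\ 1\le -xy<\zeta^2,\ t_0(x,y)\le s\le t_0(x,y)+\varepsilon\}.
\eeqq

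Next I would verify that $E$ is a fundamental domain for the $G$-action on the full preimage of $\mathcal{L}_\varepsilon(\gamma)$ in $T_1\HH$, so that $\mu(\mathcal{L}_\varepsilon(\gamma))=\tilde\mu(E)$. Since $\varepsilon<R_{l(\gamma)}-r<R_{l(\gamma)}$, every vector of $\mathcal{L}_\varepsilon(\gamma)$ is based in the embedded collar $\mathbf{C}_{R_{l(\gamma)}}(\gamma)$, so by the Collar Lemma the only elements of $G$ that identify lifts based inside $\mathbf{C}_{R_{l(\gamma)}}(\mathcal{I})$ are the powers of $g$; and since a geodesic of $\HH$ meets $\mathcal{I}$ in at most one point while the $\langle g\rangle$-translates of $\tilde\gamma$ tile $\mathcal{I}$, the set $E$ meets each such orbit exactly once. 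Integrating, the $s$-integral contributes a factor $\varepsilon$ and
\beqq
\mu(\mathcal{L}_\varepsilon(\gamma))=\tilde\mu(E)=\frac{\varepsilon}{\pi\,\text{area}(S)}\iint_{\substack{x<0<y\\ 1\le -xy<\zeta^2}}\frac{dx\,dy}{(x-y)^2}.
\eeqq

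It then remains to evaluate the double integral and show that it equals $l(\gamma)=\log\zeta$. I would substitute $x=-u$ and then pass to the variables $p=uy\in[1,\zeta^2]$ and $w=\log(u/y)\in\RR$, so that $u=\sqrt{p}\,e^{w/2}$, $y=\sqrt{p}\,e^{-w/2}$, $du\,dy=\tfrac12\,dp\,dw$ and $(u+y)^2=4p\cosh^2(w/2)$; the integral then separates as $\tfrac18\bigl(\int_1^{\zeta^2}\tfrac{dp}{p}\bigr)\bigl(\int_{-\infty}^{\infty}\tfrac{dw}{\cosh^2(w/2)}\bigr)=\tfrac18\cdot(2\log\zeta)\cdot 4=\log\zeta$, and combining the two displays gives the claim. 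I expect the one genuine obstacle to be the fundamental-domain bookkeeping in the third step---identifying precisely which $G$-translates of $E$ to use and checking that they are pairwise disjoint---which relies on the embeddedness of the collar and on the fact that a geodesic meets $\mathcal{I}$ in at most one point; the concluding change of variables is routine.
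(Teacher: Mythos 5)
Your proposal is correct and follows essentially the same route as the paper: lift to $\HH$ with the normalization of Section~\ref{norm}, describe the thickened section explicitly in $(x,y,t)$-coordinates over the fundamental arc $\tilde\gamma$, check that the projection is injective, and evaluate the resulting $\tilde\mu$-integral, which indeed equals $\frac{\varepsilon\log\zeta}{\pi\,\text{area}(S)}$. The only differences are cosmetic: you orient the geodesics with $x<0<y$ and thicken forward where the paper uses $x>0>y$ and thickens backward, you spell out the fundamental-domain bookkeeping that the paper merely asserts, and you evaluate the double integral by the substitution $p=uy$, $w=\log(u/y)$ rather than by the paper's direct iterated integration -- all of which lead to the same value $\log\zeta=l(\gamma)$.
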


\begin{proof}
 
The cross-section $\mathcal{L}(\gamma)$ and the thickened section  $ \mathcal{L}_{\varepsilon}(\gamma)$ lift to  the subsets $\mathcal{L}(\tilde{\gamma})$ of $T_1\HH$ over $\tilde{\gamma}$ and its thickened section $ \mathcal{L}_{\varepsilon}(\tilde{\gamma})$, respectively.  Furthermore, the projection from $\mathcal{L}_{\varepsilon}(\tilde{\gamma})$ to  $ \mathcal{L}_{\varepsilon}(\gamma)$ is injective.  In this way we reduce the computation in Proposition \ref{thick} to a calculation in $(x,y,t)$-coordinates in $\RR^3$ of $\tilde{\mu}(\mathcal{L}_{\varepsilon}(\tilde{\gamma}))=\mu( \mathcal{L}_{\varepsilon}(\gamma)).$
\vskip .1in

For a given $x\in \RR^+$ and $t>0$, the point $y$ at which the geodesic $\overline{xy}$ with endpoints $x$ and $y$  meets the point $it\in \mathcal{I},$ is the solution to the equation

\beq\label{repeat}
|it-\frac{x+y}{2}|^2=(\frac{x-y}{2})^2.
\eeq
This is ${y=-\frac{t^2}{x}}.$ Therefore for a given $x>0$, $I_x=[-\frac{\zeta^2}{x},-\frac{1}{x})$ is  the interval of values $y\in  \RR $ for which  $\overline{xy}$ intersects the interval $\tilde{\gamma}$.

For $x\in \RR$ and $y\in I_x$, let $t_{xy}$ denote the parameter value for which the unit tangent vector   $(x,y,t_{xy})\in  \mathcal{L}(\tilde{\gamma}).$
 
Then
\beq\label{need}
 \mathcal{L}_{\varepsilon}(\tilde{\gamma}) =\{ (x,y,t)\,|\,x\in\RR,\, y\in I_x\, \text{and}\, t_{xy}-\varepsilon \leq t\leq t_{xy }  \}.
\eeq

Consequently, 
\beq\label{area}
 \tilde{\mu}(\mathcal{L}_{\varepsilon}(\tilde{\gamma}))=\frac{1}{\pi\,\text{area}(S)}\int_{\RR^+}\int_{I_x}\int_{t_{xy}-\varepsilon }^{t_{xy} }(x-y)^{-2}\,dtdydx  
 \eeq
 \beqq
 \,\,\,\,\,\,\,=\frac{\varepsilon}{\pi\,\text{area}(S)}\int_{0}^{\infty}\int_{-\frac{\zeta^2}{x}}^{ -\frac{1}{x} }(x-y)^{-2}\,dydx.  
\eeqq
\beqq
 \,\,\,=\frac{\varepsilon}{ \pi\,\text{area}(S)}\log \zeta =\frac{\varepsilon }{ \pi\,\text{area}(S)}l(\gamma)
   \eeqq
\end{proof}\subsection{Many geodesics}

We shall prove a theorem that is slightly more general than Theorem \ref{return0}.
Let $\{\gamma_i\}_{i=1}^n$ be a finite sequence of  mutually disjoint simple closed geodesics, except that we allow   geodesics to appear twice in the list. To each $\gamma_i$ in the sequence, associate a distinct half-collars $ A_i.$ With $\Gamma$ as before and $\mathcal{A} =\cup A_i$, define 
$\mathcal{L}(\Gamma, \mathcal{A})=\cup\mathcal{L}(\gamma_i,  A_i) $ and $\mathcal{L}_{\varepsilon}(\Gamma, \mathcal{A})=\cup\mathcal{L}_{\varepsilon}(\gamma_i,  A_i)$. In this case there is not necessarily a subsurface $M$ with which the half-collars are associated.
 
Suppose $v\in \mathcal{E}$. Then $G_t(v)$ will meet $\mathcal{L}(\Gamma, \mathcal{A})$ in a sequence of points $G_{\tau_j}(v).$ In other words, $\alpha_v(\tau_j)$ will lie on one of the $\gamma_i$ with a tangent pointing into $A_i.$ 
Let $\mathcal{N}_v(t) $ denote the number of times the orbit lies in $\mathcal{L}(\Gamma, \mathcal{A})$ or the number of times $\alpha_v$ crosses one of the geodesics in the direction stipulated by a half-collar.

\begin{thm}\label{game}
For almost all $v\in T_1S$  
\beq\label{average2}
\d{\lim_{t\rightarrow\infty}\frac{ \mathcal{N}_v(t)}{t}=\frac{l(\Gamma)}{\pi\, \text{\em area} (S)}.}
\eeq

\end{thm}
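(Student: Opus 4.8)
The plan is to realize $\mathcal N_v(t)$ as the number of anchor points of the disjoint intervals making up the occupation set of a thin thickening $\mathcal L_\varepsilon(\Gamma,\mathcal A)$ of the cross--section, and then to read off the limit from the Ergodic Theorem applied to $\chi_{\mathcal L_\varepsilon(\Gamma,\mathcal A)}$, exactly in the spirit of the proof of Proposition \ref{thick}. First I would evaluate $\mu(\mathcal L_\varepsilon(\Gamma,\mathcal A))$. For $\varepsilon$ sufficiently small the sets $\mathcal L_\varepsilon(\gamma_i,A_i)$, $1\le i\le n$, are pairwise disjoint: a vector in $\mathcal L_\varepsilon(\gamma_i,A_i)$ is based within distance $\varepsilon$ of $\gamma_i$ on the side of $A_i$, so when $\gamma_i\ne\gamma_j$ the positive separation of disjoint closed geodesics keeps $\mathcal L_\varepsilon(\gamma_i,A_i)$ and $\mathcal L_\varepsilon(\gamma_j,A_j)$ apart, while if the same geodesic occurs twice in the list the two associated half--collars lie on opposite sides of it. Applying Proposition \ref{thick} to each factor then gives
\[
\mu(\mathcal L_\varepsilon(\Gamma,\mathcal A))=\sum_{i=1}^n\mu(\mathcal L_\varepsilon(\gamma_i,A_i))=\sum_{i=1}^n\frac{\varepsilon\,l(\gamma_i)}{\pi\,\text{area}(S)}=\frac{\varepsilon\,l(\Gamma)}{\pi\,\text{area}(S)} .
\]

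Next I would record the only geometrically substantive point: consecutive returns are uniformly spread out, i.e.\ there is a constant $w_0>0$, independent of $v$, with $\tau_{j+1}-\tau_j\ge w_0$ for all $j$. Indeed, between two consecutive hits the geodesic $\alpha_v$ runs from a point of some $\gamma_i$ to a point of some $\gamma_k$; if $\gamma_i\ne\gamma_k$ the intervening arc has length at least $\operatorname{dist}(\gamma_i,\gamma_k)>0$, and if $\gamma_i=\gamma_k$ its two endpoints lie on \emph{distinct} lifts of that geodesic (otherwise the lift of $\alpha_v$ would coincide with that lift and $\alpha_v$ would be periodic, impossible for $v\in\mathcal E$), and by the Collar Lemma distinct lifts of $\gamma_i$ are at distance at least $2R_{l(\gamma_i)}>0$. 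Thus one may take $w_0=\min\!\left(\min_i 2R_{l(\gamma_i)},\ \min_{\gamma_i\ne\gamma_j}\operatorname{dist}(\gamma_i,\gamma_j)\right)$. Fixing any $\varepsilon<w_0$ that is also small enough for the preceding paragraph, the occupation set $\{\,s\ge0:G_sv\in\mathcal L_\varepsilon(\Gamma,\mathcal A)\,\}$ along the forward orbit of $v$ is then a disjoint union of intervals, one of length $\varepsilon$ for each hitting time $\tau_j$, whence
\[
\int_0^t\chi_{\mathcal L_\varepsilon(\Gamma,\mathcal A)}(G_sv)\,ds=\varepsilon\,\mathcal N_v(t)+E_v(t),\qquad |E_v(t)|\le 2\varepsilon ,
\]
the bounded error accounting only for the at most two intervals meeting an endpoint of $[0,t]$.

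Finally I would invoke the Ergodic Theorem: since the geodesic flow on $(T_1S,\mu)$ is ergodic, Birkhoff's Theorem applied to $\chi_{\mathcal L_\varepsilon(\Gamma,\mathcal A)}\in L^1(\mu)$ gives, for $\mu$--almost every $v$,
\[
\frac1t\int_0^t\chi_{\mathcal L_\varepsilon(\Gamma,\mathcal A)}(G_sv)\,ds\ \longrightarrow\ \mu(\mathcal L_\varepsilon(\Gamma,\mathcal A)),\qquad t\to\infty .
\]
Dividing the identity of the previous paragraph by $t$ and letting $t\to\infty$, so that $E_v(t)/t\to0$, one obtains $\varepsilon\,\mathcal N_v(t)/t\to\mu(\mathcal L_\varepsilon(\Gamma,\mathcal A))$ for a.e.\ $v$, and therefore, by the first step,
\[
\lim_{t\to\infty}\frac{\mathcal N_v(t)}{t}=\frac1\varepsilon\,\mu(\mathcal L_\varepsilon(\Gamma,\mathcal A))=\frac{l(\Gamma)}{\pi\,\text{area}(S)} ,
\]
which is the assertion. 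Since the value on the right does not involve $\varepsilon$, one admissible choice of $\varepsilon$ suffices and no intersection of null sets over a sequence $\varepsilon_k\to0$ is needed; one uses only that $\mathcal L(\Gamma,\mathcal A)$ is a genuine cross--section (so $\mathcal N_v(t)\to\infty$ for $v\in\mathcal E$), which reduces to the single--curve case of $\S\ref{boundarya}$.

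I expect all the real work to sit in the middle step — extracting from the Collar Lemma both the disjointness of the constituents of $\mathcal L_\varepsilon(\Gamma,\mathcal A)$ and the uniform lower bound $w_0$ on the gaps $\tau_{j+1}-\tau_j$, since it is exactly this that licenses replacing the occupation integral by $\varepsilon\,\mathcal N_v(t)$ up to a bounded error; the measure computation and the appeal to ergodicity are then routine. As a bonus, Theorem \ref{return0} drops out as the special case where the $\gamma_i$ are the (distinct) boundary curves of $M$ and each $A_i$ is the half--collar lying inside $M$: then crossing some $\gamma_i$ into $A_i$ is precisely the event of entering $M$, so the hitting times $\tau_n$ of $\mathcal L(\Gamma,\mathcal A)$ coincide with the entry times $t_n$ of the excursions of $\alpha_v$ into $M$, and $\#\{n:t_n<t\}=\mathcal N_v(t)$.
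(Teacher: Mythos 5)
Your proposal is correct and follows essentially the same route as the paper: thicken the cross-section $\mathcal{L}(\Gamma,\mathcal{A})$, sandwich $\varepsilon\,\mathcal{N}_v(t)$ between the occupation integral of $\charf_{\mathcal{L}_{\varepsilon}(\Gamma,\mathcal{A})}$ up to a $2\varepsilon$ error, and conclude via the Ergodic Theorem together with Proposition \ref{thick}. The only difference is that you explicitly justify, via the Collar Lemma and the positive separation of the $\gamma_i$, the uniform gap between return times and the disjointness of the thickened pieces --- details the paper asserts implicitly with ``for $\varepsilon$ small we have the inequalities'' --- so your write-up is a slightly more complete version of the same argument.
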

\begin{proof}
The characteristic function of a set $Y$ is written $\charf_Y$. For $\varepsilon$ small we have the inequalities
\beq\label{for proof second thm}
\int_0^t\charf_{\mathcal{L}_{\varepsilon}(\Gamma, \mathcal{A})}(G_{\tau}(v))d\tau\,-2\varepsilon\,\leq\, \varepsilon\, \mathcal{N}_v(r)(t)\,\leq\,
  \int_0^t\charf_{\mathcal{L}_{\varepsilon}(\Gamma, \mathcal{A})}(G_{\tau}(v))d\tau\,+2\varepsilon.
  \eeq
Divide through by $t\varepsilon$ and let $t\rightarrow\infty$. By the Ergodic Theorem and Proposition \ref{thick}, for almost all $v\in T_1S$, the left and right hand limits  converge to 
\beqq
 (1/\varepsilon)\lim_{t\rightarrow\infty}\frac{1}{t}\int_0^t\charf_{\mathcal{L}_{\varepsilon}(\Gamma, \mathcal{A})}(G_{\tau}(v))d\tau=
  (1/\varepsilon)\sum_{i=1}^n \lim_{t\rightarrow\infty}\frac{1}{t}\int_0^t\charf_{\mathcal{L}_{\varepsilon}(\gamma_i,  A_i)}(G_{\tau}(v))d\tau 
 \eeqq
 \beqq
 =(1/\varepsilon)\sum_{i=1}^n\mu(\mathcal{L}_{\varepsilon}(\gamma_i )=\frac{\sum_{i=1}^n  l(\gamma_i)}{\pi\, \text{ area} (S)}= \frac{  l(\Gamma)}{\pi\, \text{  area} (S)}.
 \eeqq
\end{proof}

\noindent {\em Proof of Theorem \ref{return0}}.
The theorem follows from Theorem \ref{game} by choosing $\Gamma$ to be the boundary of the $M$ and taking the half-collar $A_i$ associated to $\gamma_i$ to be the  one inside $M$.
\qed
\vskip .1in

\subsection{Excursion times} 
 
\noindent {\em Proof of Corollary \ref{return1}}.\, 
With the possible exception of the very first, the sum of the first $n$ excursion lengths is
\beqq\label{flow}
\sum_{i=1}^{n} (s_i-t_i)=\int_0^{s_n}\charf_{T_1M}(G_{\tau}(v))\,d\tau
\eeqq
The average can then be written in the form 
\beqq
\frac{1}{s_n}\int_0^{s_n}\charf_{T_1M}(G_{\tau}(v))\,d\tau\,\times\, 
 \frac{s_n}{n}\\
\eeqq
The value $n$ on the right  is $\#\{i\,|\, t_i<s_n\},$ from Theorem \ref{return0}. By that theorem, the limit as $n\rightarrow\infty$ of $s_n/n$ exists and is the inverse of the right-hand side of equation (\ref{average}). By the Ergodic Theorem the limit of the first factor also exists and is the  $\mu$-measure of $T_1M$; this is, the area of $ M$ divided by the area of $ S$. We should note that in both instances convergence is almost everywhere. Taken together  
 
this completes the proof.
\qed

\section{Proofs for collars}

\subsection{Preliminaries}\label{phi}
Recall that $\phi$ is the angle made by the ray $\tilde{\lambda}_A$ and the imaginary axis.  Write $\sin\phi +i\cos\phi=a+ib=p_{\phi}$. The relationship between area$(\bf{C}_r),$ $ r$ and $\phi$ is given in the following proposition.
 
 \begin{prop}\label{prop2}
 $\text{\em area}({\bf C_r}(\gamma))=2l(\gamma)\tan\phi=2l(\gamma)\sinh r.$ 
  In particular,\\
   $\d{\cosh r= \frac{1}{\cos\phi}=\frac{1}{b} }$.
 \end{prop}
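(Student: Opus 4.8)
The plan is to work in the normalized picture from Section~\ref{norm}, where $\gamma$ is covered by the imaginary axis $\mathcal{I}$, the stabilizer is generated by $g(z)=\zeta z$ with $\log\zeta=l(\gamma)$, and the boundary line $\tilde{\lambda}_{A_r}$ of the half-collar $\mathbf{C}_r(\mathcal{I})$ emanates from the origin making angle $\phi$ with $\mathcal{I}$. First I would establish the relation $\cosh r=1/\cos\phi$ purely from hyperbolic geometry: a point on $\tilde{\lambda}_{A_r}$ and its foot on $\mathcal{I}$ together with the origin span a region, and the distance from $\tilde{\lambda}_{A_r}$ to $\mathcal{I}$ is constant (both are geodesics through $0$ with a common endpoint at $\infty$ after passing to the limit, or more directly, the Euclidean ray making angle $\phi$ with the imaginary axis is at constant hyperbolic distance $r$ from $\mathcal{I}$). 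The standard formula for the distance from $\mathcal{I}$ to the Euclidean ray $\{\rho e^{i(\pi/2-\phi)}:\rho>0\}$ gives $\sinh(\text{dist})=\tan\phi$, whence $r$ satisfies $\sinh r=\tan\phi$; combined with $\sin^2\phi+\cos^2\phi=1$ this yields $\cosh r=1/\cos\phi=1/b$, using the notation $p_\phi=\sin\phi+i\cos\phi=a+ib$ from the start of Section~\ref{phi}.

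Next I would compute $\text{area}(\mathbf{C}_r(\gamma))$. Since $\mathbf{C}_r(\gamma)$ is the injective image of the fundamental domain for $\langle g\rangle$ inside $\mathbf{C}_r(\mathcal{I})$, its area equals the hyperbolic area of the region $\{z\in\mathbf{C}_r(\mathcal{I}): 1\le |z|<\zeta\}$. This region is a union of two "sectors" (one on each side of $\mathcal{I}$), each bounded by an arc of $\mathcal{I}$, an arc of one of the lines $\tilde{\lambda}_{A_r}$, $\tilde{\lambda}_{B_r}$, and two circular arcs $|z|=1$, $|z|=\zeta$ centered at the origin. In polar-type coordinates adapted to rays from the origin the area integral separates nicely: writing $z=\rho e^{i\theta}$ the hyperbolic area element is $\rho^{-1}\csc^2\theta\,d\rho\,d\theta$ (since $y=\rho\sin\theta$), so integrating $\rho$ over $[1,\zeta]$ gives a factor $\log\zeta=l(\gamma)$, and integrating $\theta$ over the angular width of the collar — which is $2\phi$ by symmetry about $\mathcal{I}$ — gives $\int (\csc^2\theta)\,d\theta=[-\cot\theta]$ evaluated between $\pi/2-\phi$ and $\pi/2+\phi$, producing $2\tan\phi$. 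Hence $\text{area}(\mathbf{C}_r(\gamma))=2l(\gamma)\tan\phi$, and substituting $\tan\phi=\sinh r$ from the first step gives $2l(\gamma)\sinh r$.

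The main obstacle, such as it is, is getting the two geometric identifications exactly right: that the Euclidean ray at Euclidean angle $\phi$ from $\mathcal{I}$ is genuinely the equidistant curve at hyperbolic distance $r$ with $\sinh r=\tan\phi$ (as opposed to $\tanh r=\sin\phi$ or some variant), and that the angular opening of the lifted collar is $2\phi$ rather than $\phi$ — this is where the stipulation that $A_r$ lifts into the right half-plane and the symmetry of $\mathbf{C}_r(\mathcal{I})$ about $\mathcal{I}$ must be invoked. Both are routine once one fixes conventions: the distance identity follows from the cross-ratio / the elementary fact that $\cosh(\text{dist}(z,\mathcal{I}))=|z|/|\,\mathrm{Re}\,z|\cdot$(appropriate correction), but cleanest is to note the ray and $\mathcal{I}$ bound an ideal-vertex region whose width is computed by the standard formula. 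Once Proposition~\ref{prop2} is in hand, the chain $\cosh r=1/\cos\phi=1/b$ is immediate, and the area identities follow by the one-line integral above, so I would present the geometric distance computation first, then the separated area integral, and finally read off the stated consequences.
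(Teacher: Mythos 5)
Your proposal is correct and follows essentially the same route as the paper: the relation $\sinh r=\tan\phi$ comes from measuring the geodesic arc of the unit circle between $\mathcal{I}$ and $\tilde{\lambda}_{A_r}$ (i.e.\ $\int_{\pi/2-\phi}^{\pi/2}\csc\theta\,d\theta=\log(\sec\phi+\tan\phi)$), and the area is the same separated polar integral $\int_1^{\zeta}\rho^{-1}d\rho\int\csc^2\theta\,d\theta$ over the fundamental annular sector, yielding $2l(\gamma)\tan\phi$. The only quibble is the parenthetical describing $\tilde{\lambda}_{A_r}$ as a geodesic sharing an endpoint with $\mathcal{I}$ --- it is an equidistant (hypercycle) ray, as you in fact use elsewhere --- but this does not affect the argument.
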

 
 \begin{proof}
 In order to see how $r$ and $\phi$ are related, we look at the geodesic segment that lies on the unit circle centered at 0 running  between the imaginary axis and the line $\tilde{\lambda}_A$.  Using polar coordinates with the hyperbolic metric in $\HH$ we see that
 \beqq
r=\int_{\frac{\pi}{2}-\phi}^{\frac{\pi}{2}} \frac{d\theta}{\sin\theta}=\log (\sec\phi+\tan\phi).
 \eeqq
  Therefore, $\d{e^{r}=\sec\phi + \tan\phi = \sqrt{1+\tan^2\phi} + \tan\phi.}$ Solving for $\tan\phi$ gives $\sinh r.$

The area of  ${\bf C}_{r}(\gamma)$ is the area of the region in $\bf{C}_r(\tilde{\gamma})$ between the two circles centered at the origin with radii 1 and $\zeta$ respectively. Again, computing with the hyperbolic metric in polar coordinates we have
\beqq
\text{area}({\bf C}_{r}(\gamma))=2\int_1^{\zeta}\frac{1}{r}\, dr\, \int_{\frac{\pi}{2}-\phi}^{\frac{\pi}{2}}\frac{1}{\sin^2\theta}\, d\theta = 2\log\zeta\tan\phi=2l(\gamma)\tan\phi.
\eeqq

 \end{proof}

The following simple, geometric lemma will be very useful.

\begin{lemma}\label{limits}
\begin{enumerate}
\item
Given $x>0$, if the geodesic $\overline{xy}$ in $\HH$   passes through $x$ and the point $tp_{\phi}$, $t>0$, then 
\beq\label{one}
y=\frac{a x t-t^2}{x-at}. 
\eeq
\item
The geodesic $\overline{xy}$ in $\HH$, $x>y$, tangent to $\tilde{\lambda}_A$ at the point $tp_{\phi}$ has endpoints 
\beq \label{second}
x=t(\frac{1+b}{a})\,\,\, \text{and}\,\,\,
 y=t(\frac{1-b}{a}).
\eeq
Consequently, given $x$, $\overline{xy}$ it tangent to $\tilde{\lambda}_A$ when 
\beq\label{third}
y=(\frac{1-b}{1+b})x.
\eeq
\end{enumerate}
\end{lemma}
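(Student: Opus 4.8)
The plan is to treat both parts as straightforward computations with the defining equation of a geodesic semicircle in $\HH$, namely that a geodesic $\overline{xy}$ with real endpoints $x>y$ is the Euclidean semicircle with center $\frac{x+y}{2}$ on the real axis and radius $\frac{x-y}{2}$, so a point $z\in\HH$ lies on it precisely when $|z-\frac{x+y}{2}|^2=(\frac{x-y}{2})^2$ — exactly the relation already used in (\ref{repeat}). For part (1), I would substitute $z=tp_\phi=t(a+ib)$ into this equation with the one unknown $y$, expand $|t(a+ib)-\frac{x+y}{2}|^2=(ta-\frac{x+y}{2})^2+(tb)^2$, set it equal to $(\frac{x-y}{2})^2$, and solve the resulting linear equation in $y$ (the quadratic terms in $y$ cancel, as always for these circle equations). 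Using $a^2+b^2=1$ to simplify should collapse the expression to $y=\frac{axt-t^2}{x-at}$; this is pure algebra and the only thing to watch is bookkeeping of signs.

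For part (2), tangency of the semicircle to the line $\tilde\lambda_A=\{tp_\phi:t\ge 0\}$ at the point $tp_\phi$ means two things: the point lies on the semicircle (the equation of part (1) holds) and the radius of the semicircle at that point is perpendicular to $\tilde\lambda_A$. I would implement perpendicularity either as a derivative/implicit-differentiation condition in (\ref{one}) — the tangent direction of $\overline{xy}$ at $tp_\phi$ must be parallel to $p_\phi$ — or, more cleanly, via the Euclidean right-triangle relation: the line through the origin in direction $p_\phi$ is tangent to the circle centered at $c=\frac{x+y}{2}$ of radius $R=\frac{x-y}{2}$ iff the distance from $c$ to that line equals $R$, with the foot of the perpendicular being the point of tangency $tp_\phi$. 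Concretely, the foot of the perpendicular from $c=(\frac{x+y}{2},0)$ onto the line $\mathbb R p_\phi$ is $\langle c,p_\phi\rangle\,p_\phi = \frac{x+y}{2}\,a\,p_\phi$, so $t=\frac{(x+y)a}{2}$; and the perpendicular-distance condition gives $\frac{x+y}{2}\cdot b = \frac{x-y}{2}$ (the sine-of-angle relation), i.e. $y=\frac{1-b}{1+b}x$. Combining $t=\frac{(x+y)a}{2}$ with $y=\frac{1-b}{1+b}x$ yields $x=t\frac{1+b}{a}$ and $y=t\frac{1-b}{a}$, which is (\ref{second}), and (\ref{third}) is just the elimination of $t$.

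I expect no real obstacle here; the only mild subtlety is making sure the orientation convention $x>y$ and the stipulation that $A_r$ lifts to the right half-plane are consistent with the signs (so that $x,y$ come out with the expected signs for $t>0$, $0<b<1$, $a>0$), and double-checking that the "foot of perpendicular" computation of $t$ is the same $t$ as in part (1) — which it must be, since both encode that $tp_\phi$ is the point of tangency. One can also sanity-check (\ref{second})–(\ref{third}) against (\ref{one}) by substituting $x=t\frac{1+b}{a}$ into (\ref{one}) and verifying it returns $y=t\frac{1-b}{a}$, which provides a self-contained confirmation that the tangency point is indeed $tp_\phi$.
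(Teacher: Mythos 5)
Your proposal is correct and follows essentially the same route as the paper: part (1) is the identical substitution of $tp_\phi$ into the semicircle equation $|z-\frac{x+y}{2}|^2=(\frac{x-y}{2})^2$ and a linear solve for $y$, and part (2) rests on the same tangency criterion (the radius at $tp_\phi$ is perpendicular to $\tilde{\lambda}_A$), which the paper implements by equating slopes to find the center $w=t/a$ and radius $\frac{b}{a}t$, while you use the equivalent foot-of-perpendicular/distance-to-line formulation. The two computations are interchangeable and yield (\ref{second}) and (\ref{third}) identically.
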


\begin{proof}
As in formula (\ref{repeat}) the geodesic  $\overline{xy}$ contains $tp_{\phi}$ if
\beqq
|\frac{x+y}{2}-tp_{\phi}|^2=(\frac{x-y}{2})^2.
\eeqq
With $p_{\phi}=a+ib$, we square both sides and simplify to get $xy-tax-tay+t^2=0.$ Solving for $y$ gives (\ref{one}).

Suppose that $\overline{xy}$ is tangent to $\tilde{\lambda}_A$ at  $tp_{\phi}$. Let $w$ be the midpoint  on the real axis  between $y$ and $x$. Then  the line from $w$ to $tp_{\phi}$ is orthogonal to $\tilde{\lambda}_A$. Equating slopes we have 
$\frac{tb}{ta-w}=-\frac{a}{b}$ or $w=\frac{t}{a}.$ Then the radius of the semi-circle $\overline{xy}$ is $|tp_{\phi}-\frac{t}{a}|=\frac{b}{a}t.$
With center and  radius in hand it is easy to write down $x$ and $y$ which gives (\ref{second}). Formula (\ref{third}) follows.
\end{proof}

\subsection{The thickened section}

In order to prove Theorem \ref{collar0} we shall recycle the approach taken in the proof of Theorem \ref{return0}, only this time the thickened section will be defined with respect to the boundary $\lambda_{A_r}$ of the collar neighborhood of $\gamma$.  The situation is somewhat more involved.

Let $\bf{s}$ denote the segment of $\tilde{\lambda}_A$ between the points $p_{\phi}$ and $\zeta p_{\phi}$. It is a preimage of $\lambda_{A_r}$ under the covering projection. Henceforth $x$ will alway be a positive number. 

We define a notion of intersection between the directed geodesic $\beta=\overline{xy}$ and the arc $\bf{s}$ that only counts the first point at which the geodesic intersects  $\tilde{\lambda}_A$. More precisely, if $\beta\cap\tilde{\lambda}_A=\emptyset$ then  set $ \overline{xy}\,\hat{\cap}\,\bf{s} =\emptyset.$ And if  $\beta$   intersects $\tilde{\lambda}_A$ in the points $\beta(t_1)$ and $\beta(t_2)$, $t_1\leq t_2$, then  
\beqq
\d{ \overline{xy}\,\hat{\cap}\,\bf{s} = \left\{ \begin{array}{cc} 
 \{\beta(t_1)\} & {\rm if} \,\beta(t_1)\in\bf{s} \\
\emptyset &  {\rm otherwise.}
\end{array} \right.} 
\eeqq

We shall define several sections of the unit tangent bundle over $\lambda_{A_r}$ by specifying  sets of triples $(x,y,t_{xy})$ in the unit tangent bundle over $\bf{s}$. 
  For $i=0,1,2, 3$ let
\beqq
\mathcal{J}^{i*}({\bf s})=\{ (x,y,t_{xy}) | R^i\, \text{and}\, \overline{xy}\,\hat{\cap}\,\bf{s}\not = \emptyset \}
\eeqq
where $R^0,\ldots ,R^3$ denote respectively the conditions, $ R^0:  x>0,y \in\RR,\, R^1:  0<y<x,\, R^2:  0<x<y,\, R^3: y<0<x.$
Given $x,y$, if $ \overline{xy}\,\hat{\cap}\,\bf{s}\not =\emptyset$ then  there is a unique $t_{xy}$ so that  the   point $(x,y,t_{xy})$ lies in the intersection. 

As in section \ref{boundarya}, define $\mathcal{J}^{i}({\bf s})=\mathcal{E}\cap \mathcal{J}^{i*}({\bf s})$ and the thickened sections
\beqq
\mathcal{J}^{i}_{\varepsilon}({\bf s})=\{ (x,y,t)  | (x,y,t_{xy}) \in \mathcal{J}^{i}({\bf s})\, \text{and}\,  t_{xy}\leq t\leq t_{xy}+\varepsilon\}. 
\eeqq
$\mathcal{J}^{i}_{\varepsilon}({\bf s})$ projects to the thickened section $\mathcal{J}^{i}_{\varepsilon}({\lambda_{A_r}})$  in the unit tangent bundle of $S$ over the boundary of the collar. Note that in formula \ref{need} we subtracted $\epsilon$ because $A_r$ was to the right of $\tilde{\gamma}$. Now it is to the left of $\tilde{\lambda}_{A_r}$.

In order to see what these sets represent, note that up to measure zero  $\mathcal{J}^{0}({\lambda_{A_r}})$ is equal to the set of all vectors over $\lambda_{A_r}$ pointing into the collar. $\mathcal{J}^{3}({\lambda_{A_r}})$ is the subset determining geodesics that cross $\gamma$. By elementary hyperbolic geometry such geodesics will then exit the collar at $\lambda_B$. Removing $\mathcal{J}^{3}({\lambda_{A_r}})$ from $\mathcal{J}^{0}({\lambda_{A_r}})$ results in two disjoint sets, which are $\mathcal{J}^{1}({\lambda_{A_r}})$ and $\mathcal{J}^{2}({\lambda_{A_r}})$. Geodesics determined by vectors from these subsets enter $\lambda_{A_r}$ from different sides of $\mathcal{J}^{3}({\lambda_{A_r}})$ and exit the collar at $\lambda_{A_r}$. Together these are all the geodesics that enter and exit via $\lambda_{A_r}$.

\begin{prop}\label{last prop}
$\d{\mu(\mathcal{J}^1_{\varepsilon}(\lambda_{A_r})\cup\mathcal{J}^3_{\varepsilon}(\lambda_{A_r})  ) =\frac{\varepsilon(1+  \cosh r) l(\gamma)}{2\pi  \, \text{\em area}(S)}}$

\end{prop}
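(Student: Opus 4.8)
The plan is to compute the measure by working in the $(x,y,t)$-coordinates on $T_1\HH$ exactly as in Proposition \ref{thick}, exploiting the fact that $\mathcal{J}^1_\varepsilon(\lambda_{A_r})\cup\mathcal{J}^3_\varepsilon(\lambda_{A_r})$ is a vertical $\varepsilon$-slab sitting over a region of the $(x,y)$-plane and that projection to $T_1S$ is injective on it (for $r<R_{l(\gamma)}$ this is the Collar Lemma normalization of Section \ref{norm}). Since the geodesic flow in these coordinates is just translation in $t$, the $t$-integral contributes a factor $\varepsilon$, and we are left with
\beqq
\mu\bigl(\mathcal{J}^1_{\varepsilon}(\lambda_{A_r})\cup\mathcal{J}^3_{\varepsilon}(\lambda_{A_r})\bigr)=\frac{\varepsilon}{\pi\,\text{area}(S)}\int\!\!\!\int_{\Omega}(x-y)^{-2}\,dy\,dx,
\eeqq
where $\Omega\subset\RR^2$ is the set of $(x,y)$ with $x>0$ for which $\overline{xy}\,\hat{\cap}\,{\bf s}\neq\emptyset$ and the pair satisfies $R^1$ or $R^3$, i.e. $y<x$ and $y\notin(0,x)$ is ruled out only in the $R^2$ branch — so $\Omega$ is precisely $\{x>0,\ y<x\}\cap\{\overline{xy}\,\hat\cap\,{\bf s}\neq\emptyset\}$ with the $R^2$ piece $\{0<x<y\}$ removed. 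The whole task thus reduces to identifying $\Omega$ explicitly and then doing an elementary double integral.

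The key step is therefore the geometric description of $\Omega$, for which Lemma \ref{limits} is tailor-made. The arc ${\bf s}$ runs from $p_\phi$ to $\zeta p_\phi$ along $\tilde\lambda_A$. For a fixed $x>0$, as $y$ decreases the semicircle $\overline{xy}$ sweeps across $\tilde\lambda_A$; by part (1) of Lemma \ref{limits} the geodesic through $x$ meeting $tp_\phi$ has $y=(axt-t^2)/(x-at)$, which lets me read off, for each $x$, the interval of $y$'s whose \emph{first} intersection point with $\tilde\lambda_A$ lies on ${\bf s}$ (the parameter $t$ ranging over $[1,\zeta]$), while part (2), formula (\ref{third}), gives the tangency locus $y=\frac{1-b}{1+b}x$ separating the geodesics that cross $\gamma$ (these fall into $R^3$ and hit $\tilde\lambda_A$ once, contributing to $\mathcal{J}^3$) from those that meet $\tilde\lambda_A$ twice (contributing to $\mathcal{J}^1$ when they come from the $0<y<x$ side). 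I would split the $x$-range at $x=\zeta$ (resp. $x=1$) where the endpoints $\zeta p_\phi$ and $p_\phi$ of ${\bf s}$ enter and leave the picture, obtaining on each piece an inner integral $\int_{y_-(x)}^{y_+(x)}(x-y)^{-2}dy=\frac{1}{x-y_+}-\frac{1}{x-y_-}$ with the endpoints $y_\pm$ given by the formulas above evaluated at $t=1$ or $t=\zeta$ and by the tangency line.

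After substituting the explicit $y_\pm(x)$, the resulting one-variable integrals are of the form $\int dx/x$ and $\int dx/(x+cx)$ over $[1,\zeta]$-type ranges, producing a clean $\log\zeta=l(\gamma)$ together with trigonometric factors in $\phi$; one then converts via Proposition \ref{prop2} ($\cosh r=1/b$, $b=\cos\phi$, $\tan\phi=\sinh r$) to land on $\frac{(1+\cosh r)\,l(\gamma)}{2}$ times the prefactor, which is exactly the claimed value. The main obstacle I anticipate is purely bookkeeping: correctly tracking which $y$-interval corresponds to the \emph{first} crossing of $\tilde\lambda_A$ landing on ${\bf s}$ versus a later crossing (the $\hat\cap$ convention), and correctly apportioning the two disjoint pieces $\mathcal{J}^1$ and $\mathcal{J}^3$ of the $y<x$ half-plane — getting the limits of integration right in each of the two or three $x$-subintervals is where a sign or endpoint error would creep in. Once $\Omega$ is pinned down the integration itself is routine.
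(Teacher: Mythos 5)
Your proposal follows essentially the same route as the paper: reduce to a double integral of $(x-y)^{-2}$ over the $(x,y)$-region cut out by the $\hat\cap$ condition together with $R^1$ and $R^3$, describing the region fibrewise in $x$ via Lemma \ref{limits} (tangency line and the endpoints of ${\bf s}$), then integrating and converting with Proposition \ref{prop2}. One correction to your sketch: the breakpoints in $x$ are not $1$ and $\zeta$ but $a$, $\zeta a$, $(1+b)/a$ and $\zeta(1+b)/a$ (the feet of the vertical lines and of the tangent geodesics through $p_\phi$ and $\zeta p_\phi$), and the paper must also treat the two orderings of $\zeta a$ versus $(1+b)/a$ separately --- otherwise your bookkeeping matches the paper's three-case computation.
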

\begin{proof}
 The computation is done in $(x,y,t)$ coordinates for the sets $\mathcal{J}^{i}_{\varepsilon}({\bf s})$. We shall describe the  limits of integration by  specifying potential $x$ values  and then giving the corresponding set of $y$ values using Lemma \ref{limits}.   Three cases are distinguished for the domain of $x$. In all we have $y<x$. For now we make the additional assumption that $\zeta a< (1+b)/a.$

If $x$ is larger than  $\zeta(1+b)/a$ then for all $y\in\RR$, $\overline{xy}\,\hat{\cap}\,\bf{s}=\emptyset.$ Thus, the first case to consider is when $(1+b)/a< x<\zeta(1+b)/a$. It follows from (\ref{second}) that for such an $x$ there is a point $y$ so that $\overline{xy}$ is tangent to $\tilde{\lambda}_A$ and that this point of tangency lies in   $\bf{s}$.  It also follows from (\ref{third}) that given $x$, the interval of corresponding $y$ values will vary between $(\frac{1-b}{1+b})x$, the point at which $\overline{xy}$ is tangent to $\bf{s}$ and    $\frac{a x \zeta-\zeta^2}{x-a\zeta}$     where  $\overline{xy}$ meets $\zeta p_{\theta} $, the upper endpoint of $\bf{s}$. Thus the measure of the set of vectors in $ \mathcal{L}_{\varepsilon}(\bf{s}) $ determined by these $(x,y)$ values is $M_1$, where
 \beqq
\pi\, \text{area}(S)\, M_1=\int_{\frac{1+b}{a}}^{\zeta(\frac{1+b}{a})}\int^{(\frac{1-b}{1+b})x }_{\frac{a x \zeta-\zeta^2}{x-a\zeta}}\int_{t_{xy}}^{t_{xy}+\varepsilon} (x-y)^{-2}\, dt\, dy\, dx
 \eeqq 
 \beqq
 = \varepsilon\bigg [\frac{1+b}{2b}\log\zeta-\log\zeta-\frac{1}{2}\log \left (\frac{(1+b)^2}{a^2}-2(1+b)+1\right ) 
  \eeqq
  \beqq
  + \frac{1}{2}\log \left ( \frac{(1+b)^2}{a^2}-2(1+b)\zeta+\zeta^2\right )          \bigg ].
 \eeqq
 
The second case is when $\zeta a<x<(1+b)/a$. 
 According to the lemma, as $y$ varies between $\frac{a x-1}{x-a }$ and  $\frac{a x \zeta-\zeta^2}{x-a\zeta},\,\, \overline{xy}\,\hat{\cap}\,\bf{s}$ varies between $p_{\phi}$ and $\zeta p_{\phi}$, taking on all values in  $\bf{s}$. Writing out the integral  as in the previous case, we see that the measure of the corresponding subset of $ \mathcal{L}_{\varepsilon}(\bf{s}) $ is equal to $M_2$ where 
 \beqq
\pi\, \text{area}(S)\, M_2=\varepsilon\bigg [ \frac{1}{2}\log \left (\frac{(1+b)^2}{a^2}-2(1+b)+1\right )-\frac{1}{2}\log\left (a^2\zeta^2-2a^2\zeta+1\right )
\eeqq
\beqq
+\log\zeta+\log (1-a^2)- \frac{1}{2}\log \left ( \frac{(1+b)^2}{a^2}-2(1+b)\zeta+\zeta^2\right )    \bigg ]
\eeqq

The final case is where $a<x<\zeta a$. Given $x$ in this interval, $\overline{xy}$ will meet $\bf{s}$  at $p_{\phi}$ when $y=\frac{a x-1}{x-a }$ and as $y$ goes to $-\infty$, $\,\overline{xy}$ limits at the vertical line intersecting $\bf{s}$ in the point $xp_{\phi}$. In this case the measure of the associated subset of $ \mathcal{L}_{\varepsilon}(\bf{s}) $ is $M_3$ with
\beqq
\pi\, \text{area}(S)\, M_3= \varepsilon[ \frac{1}{2}\log\left (a^2\zeta^2-2a^2\zeta+1\right )-\frac{1}{2}\log(1-a^2)].
\eeqq

Using Proposition \ref{prop2} and the facts $a=\sin\phi$, $b=\cos\phi$ and $\log\zeta =l(\gamma)$, we get $$M_1+M_2+M_3=\frac{\varepsilon(1+b)l(\gamma)}{2\pi b\, \text{area}(S)} =\frac{\varepsilon(1+  \cosh r) l(\gamma)}{2\pi  \, \text{area}(S)},$$
which is
the proposition under the assumption that $\zeta a< (1+b)/a.$ If we reverse the inequality a similar computation yields the same result.
\end{proof}

\subsection{Returns to a collar} 
\noindent{\em Proof of Theorem 2}.   
To begin we show that 
\beq\label{measure middle section}
\mu(\mathcal{J}^3_{\varepsilon}(\lambda_{A_r}))=\mu( \mathcal{L}_{\varepsilon}(\gamma))=\frac{\varepsilon l(\gamma)}{\pi\, area (S)}.
\eeq

  It was observed in Section  \ref{boundarya} that given $v\in \mathcal{E}$,  $G_t(v)$ will meet $\mathcal{L}(\Gamma, \mathcal{A})$ in a sequence of points $G_{\tau_j}(v), j\in \ZZ^+.$ Each time $\alpha_v$ meets the geodesic $\gamma$ from the $A$ side,   it must have either originated in $A$ or else crossed $\lambda_{A_r}$ first, before proceeding to $\gamma$. Conversely, each time $\alpha_v$ crosses $\lambda_{A_r}$ so that its tangent lies in $\mathcal{J}^3_{\varepsilon}(\lambda_{A_r}),$  it must go on to cross $\gamma$. Thus, for each $j\geq 2$ there will be a corresponding value $\eta_j$ so that $G_{\eta_j}(v)\in \mathcal{J}^3_{\varepsilon}(\lambda_{A_r}) $ and if $G_{\eta}(v)\in   \mathcal{J}^3_{\varepsilon}(\lambda_{A_r}),$ then $\eta=\eta_j$ for some $j$. In other words, the section $\mathcal{J}^3_{\varepsilon}(\lambda_{A_r})$ counts crossing of $\gamma$ from the $A$ side exactly as does $  \mathcal{L}_{\varepsilon}(\gamma)$. 

As a consequence of the above, we can replace $\mathcal{L}_{\varepsilon}(\Gamma, \mathcal{A})$ (with $\Gamma=\gamma$) in formula (\ref{for proof second thm}) by $\mathcal{J}^3_{\varepsilon}(\lambda_{A_r})$. It follows that the value in formula (\ref{average}) of Theorem \ref{return0} is $(1/\varepsilon) \mu( \mathcal{J}^3_{\varepsilon}(\lambda_{A_r}) )=\frac{l(\gamma)}{\pi\, area (S)},$ completing the argument.

The hyperbolic isometry $h(z)=\zeta/\overline{z}$ induces an isometry of $T_1\HH$ that interchanges the  sets $\mathcal{J}^{1}_{\varepsilon}({\bf s})$ and $\mathcal{J}^{2}_{\varepsilon}({\bf s})$. Therefore, we have 
\beqq
\mu(\mathcal{J}^1_{\varepsilon}(\lambda_{A_r})=\mu(\mathcal{J}^2_{\varepsilon}(\lambda_{A_r})\,\,  \text{or}\,\,    \mu(\mathcal{J}^1_{\varepsilon}(\lambda_{A_r})  \cap\mathcal{J}^2_{\varepsilon}(\lambda_{A_r}))= 2\mu(\mathcal{J}^1_{\varepsilon}(\lambda_{A_r})).  
\eeqq

Now making use of the outcomes of formula (\ref{measure middle section}) and Proposition \ref{last prop}, we have
\beqq
\mu(\mathcal{J}^1_{\varepsilon}(\lambda_{A_r})) =\mu(\mathcal{J}^1_{\varepsilon}(\lambda_{A_r})\cup \mathcal{J}^3_{\varepsilon}(\lambda_{A_r}))- \mu(\mathcal{J}^3_{\varepsilon}(\lambda_{A_r}))=\frac{\varepsilon(\cosh r -1) l(\gamma)}{2\pi  \, \text{area}(S)} 
\eeqq
and
\beqq
\mu(\mathcal{J}^0_{\varepsilon}(\lambda_{A_r}))  = 2 \mu(\mathcal{J}^1_{\varepsilon}(\lambda_{A_r}))+ \mu(\mathcal{J}^3_{\varepsilon}(\lambda_{A_r}))=\frac{\varepsilon l(\gamma) \cosh r }{ \pi\, \text{area}(S)}.\eeqq

Let $P=\{(t_i,s_i)\}$ and $P'=\{(t'_i,s'_i)\}$ be the excursion parameters of $\alpha_v$.  For $r>R_{l(\gamma)}$ and $v\in \mathcal{E}$ define the counting functions $N^0_v(r)(t)=\#\{i\,|\, t_i<t\},$ and $N^1_v(r)(t)=\#\{i\,|\, t'_i<t\}.$ 
Taking the role of $\mathcal{L} ( \gamma )$ from the proof of Theorem \ref{return0} the sections $\mathcal{J}^0 (\lambda_{A_r})$ and $\hat{\mathcal{J}}(\lambda_{A_r})=\mathcal{J}^1 (\lambda_{A_r})\cup \mathcal{J}^2 (\lambda_{A_r})$ count crossings by $\alpha_v$ of $\lambda_{A_r}$ that go into $A$ and crossing of $\lambda_{A_r}$ by $\alpha_v$ into $A$ that exit $A$ through $\lambda_{A_r}$, respectively. Thus, rewriting formula (\ref{for proof second thm}) in the first instance gives, for $\varepsilon>0$ sufficiently small,
\beqq
\int_0^t\charf_{ \mathcal{J}^0_{\varepsilon}(\lambda_{A_r})}(G_{\tau}(v))d\tau\,-2\varepsilon\,\leq\, \varepsilon N^0_v(r)(t)\,\leq\,
  \int_0^t\charf_{\mathcal{J}^0_{\varepsilon}(\lambda_{A_r})}(G_{\tau}(v))d\tau\,+2\varepsilon.
  \eeqq
Again, dividing by $t\varepsilon$, letting $t$ go to infinity and applying the Ergodic Theorem proves the first part of Theorem \ref{collar0}.
The proof of the second part of the theorem follows if  $\mathcal{J}^0$ and $N^0$ are replaced by $\hat{\mathcal{J}}$ and $N^1.$
\qed\\
 \\

\noindent{\em Proof of 
Corollary  \ref{cor2part1}}.
Proceeding as in the proof of Corollary \ref{return1}, we have
\beqq
 \lim_{n\rightarrow\infty}\frac{1}{n}\sum_{i=1}^{n} (s_i-t_i) 
 =\lim_{n\rightarrow\infty}\frac{1}{n}\int_0^{s_n}\charf_{T_1M}(G_{\tau}(v))\,d\tau
 \eeqq
 \beqq
=\lim_{n\rightarrow\infty}\frac{1}{s_n}\int_0^{s_n}\charf_{T_1M}(G_{\tau}(v))\,d\tau\,\times\, 
 \frac{s_n}{n}.
 \eeqq
 By the Ergodic Theorem and Theorem \ref{collar0}, this equals
\beqq
 \frac{ \text{area}(\bf{C}_r)}{\text{area}(S)}\,\times\,\frac{\pi\,\text{area}\,(S)}{l(\gamma)\cosh r} =2\pi\tanh r.
 \eeqq
Here, the later equality follows from Proposition \ref{prop2}.\qed
\eject
 \noindent{\em Proof of Corollary \ref{bosma}}.  
 We argue as in \cite{haas}. Write $N_v(r)(t)= \#\{ j\,|\, t_j<t\}$ for the function that counts returns to the radius $r$ collar.  Using Theorem \ref{collar0},  the distribution  $\delta(r)$ can be written as  
\beqq 
\delta(r)= \lim_{n\rightarrow\infty}\frac{ N_v(r)(t)}{N_v(R_0)(t)}=\frac{\cosh r}{\cosh R_0}.
\eeqq
This completes the proof of Corollary \ref{cor2part1}.
 \qed

\end{document}